\numberwithin{equation}{section}
\theoremstyle{plain}
\newtheorem{theorem}{Theorem}[section]
\newtheorem{lemma}[theorem]{Lemma}
\newtheorem{corollary}[theorem]{Corollary}
\newtheorem{conjecture}[theorem]{Conjecture}
\newtheorem{question}[theorem]{Question}
\theoremstyle{definition}
\theoremstyle{remark}
\newtheorem{remark}[theorem]{Remark}
\newtheorem{case[theorem]}{Case}
\title[\parbox{14cm}{\centering{ Harmonic analysis related to homogeneous varieties \hspace{1in}}} \quad]{ Harmonic analysis related to homogeneous varieties in three dimensional vector space over finite fields }
\author{ Doowon Koh and Chun-Yen Shen }
\address{Department of Mathematics\\
Michigan State University \\
East Lansing, MI 48824,  USA}
\email{koh@math.msu.edu}
\address{Department of Applied Mathematics\\
 National Chiao Tung University\\
Hsinchu 300, Taiwan}
\email{chunyshen@gmail.com}
\thanks{Key words and phrases: Extension theorem, averaging operator, finite fields, Erd\H os-Falconer distance problems, homogeneous polynomials}
\subjclass[2000]{42B05; 11T24, 52C17}
\begin{document}

\begin{abstract} In this paper we study  extension problems, averaging problems, and generalized Erd\H os-Falconer distance problems associated with arbitrary homogeneous varieties in three dimensional vector space over finite fields. In the case when homogeneous varieties in three dimension do not contain any plane passing through the origin, we obtain the general best possible results on aforementioned three problems. In particular, our results on extension problems  recover and generalize the work due to Mockenhaupt and Tao who completed the particular conical extension problems in three dimension. Investigating the Fourier decay on homogeneous varieties, we give the complete mapping properties of averaging operators over homogeneous varieties in three dimension. In addition, studying the generalized Erd\H os-Falconer distance problems related to homogeneous varieties in three dimensions, we improve the cardinality condition on sets where the size of distance sets is nontrivial. Finally, we address a question of our problems for homogeneous varieties in higher odd dimensions.
\end{abstract}
\maketitle
\tableofcontents

\section{Introduction}
In classical harmonic analysis, extension theorems and averaging problems are the problems to deal with the boundedness of operators.
In the Euclidean space, these problems have been well studied, but have not yet been solved in higher dimensions or general setting.
On the other hand, the Falconer distance problem is considered as a continuous analogue of the Erd\H os distance problem. Although these topics are origin from studying the geometry of sets, the  harmonic analysis methods have been used as a main tool. For example, the best known results on the Falconer distance problem were obtained by Erdo\~{g}an \cite{Er05} who applied Tao's bilinear restriction theorem \cite{Ta03}, one of the most beautiful theorems in harmonic analysis. However, this problem is still open in any dimensions and it has been believed that the problem could be completely solved by using the skills from other mathematical fields along with known ideas from classical harmonic analysis. For this reason, main topics in harmonic analysis  have been recently studied in the finite field setting, in part because finite fields not only serve as a typical model for the Euclidean space but also have powerful structures which enable us to  relate our problems to other well-studied problems in  arithmetic combinatorics, algebraic geometry, and analytic number theory. Moreover, finite fields often yield new facts in which one may be interested. In this paper we study such problems on harmonic analysis in finite fields. More precisely, we focus on studying the finite field analogues of the following well-known Euclidean problems related to homogeneous varieties in three dimension: the extension problem, the averaging problem, and the Erd\H os-Falconer distance problem.  \\

Before we introduce our main problems and results, let us briefly review such problems in the Euclidean setting.
Let $H$ be a set in ${\mathbb R}^d$ and $d\sigma$  a measure on the set $H.$
In the Euclidean case, the extension problem is to determine the optimal range of exponents $1\leq p,r \leq \infty$ such that the following extension estimate holds:
$$\|(fd\sigma)^\vee\|_{L^r({\mathbb R}^d)}\leq C(p,r,d)\|f\|_{L^p(H, d\sigma)} ~~\mbox{for all}~~ f\in L^p(H, d\sigma)$$
where   $(fd\sigma)^\vee$ denotes the inverse Fourier transform of the measure $fd\sigma.$
This problem was first addressed in 1967 by Stein \cite{St78} and it has been extensively studied in the last few decades.
For a comprehensive survey of this problem, we refer readers to \cite{Ta04}.\\

 The averaging problem also asks us to find the exponents $1\leq p,r \leq \infty$ such that the following inequality holds:
\begin{equation}\label{Averagingquestion}
\|f\ast d\sigma\|_{L^r({\mathbb R}^d)}\leq C(p,r,d) \|f\|_{L^p({\mathbb R}^d)}~~\mbox{for all}~~f\in L^p({\mathbb R}^d),\end{equation}
where $d\sigma$ is a measure supported on a surface $H$ in ${\mathbb R}^d$ and the convolution $f\ast d\sigma$ is defined by the relation
$ f\ast d\sigma(x)= \int_{H} f(x-y) ~ d\sigma(y)$ for  $x\in \mathbb R^d.$ For classical results of this problem, see \cite{St70}, \cite{St71}, and \cite{Li73}. In particular, Iosevich and Sawyer \cite{IS96} obtained the sharp mapping properties of averaging operators on a graph of homogeneous function of degree $\geq 2.$ \\

The Erd\H os distance problem and the Falconer distance problem are problems to measure sizes of distance sets determined by discrete sets and continuous sets respectively. Given $E, G \subset \mathbb R^d, d\geq 2,$ the distance set $\Delta(E,G)$ is defined by
$$ \Delta(E,G)=\{|x-y|: x\in E, y\in G\},$$
where $|\cdot|$ denotes the usual Euclidean norm. Given finite sets $E,G$, the Erd\H os distance problem is to determine the smallest possible cardinality of the  distance set $\Delta(E,G) $ in terms of sizes of sets $E, G.$
In the case when $E=G$, Erd\H os \cite{Er46} first studied this problem and conjectured that for every finite set $E\subset \mathbb R^d,$
$$ |\Delta(E,E)|\gtrapprox |E|^{\frac{2}{d}},$$
where $|\cdot|$ denotes the cardinality of the finite set. However, this problem has not been solved in all dimensions $d\geq 2.$ For the recent development and the best known results on this problem, see \cite{SolTo01},\cite{KT04}, \cite{SV04}, and \cite{SV05}.
As a continuous analog of the Erd\H os distance problem,  Falconer \cite{Fa85} conjectured that if the Hausdorff dimension of a Borel subset $E$ in $\mathbb R^d, d\geq 2,$ is greater than $d/2$, then  the Lebesgue measure of the distance set $\Delta(E,E)$ must be positive. This problem is known as the Falconer distance problem and is also open in all dimensions. The best known result on this problem is due to Erdo\~{g}an \cite{Er05} who extended the work \cite{Wo99} by Wolff showing that any Borel set $E$ with the Hausdorff dimension greater than $ d/2+ 1/3$ yields  the distance set $\Delta(E,E)$ with a positive Lebesgue measure.\\

The purpose of this paper is to obtain the sharp results on the extension problem, the averaging problem, and the Erd\H os Falconer distances problem associated with arbitrary homogeneous varieties in three dimensional vector space over finite fields. In the finite field setting, we shall prove that all these problems can be completely understood by observing that  most homogeneous varieties in three dimension intersect with only few lines in any plane passing through the origin. From this observation and properties of homogeneous varieties, we obtain an extremely good Fourier decay on the general homogeneous varieties in three dimension, which may be a specific property from finite fields.  It is also known \cite{Ko09} that the estimates of the Fourier transform over homogeneous varieties associated with non-degenerate quadratic polynomials are distinguished in even dimensions and odd dimensions. Precisely, obtaining good estimates of the Fourier transform over homogeneous varieties in even dimensions can not be expected. On the other hand, we shall give a reasonable conjecture to show that most homogeneous varieties in odd dimensions would yield good Fourier decay. Our work on homogeneous varieties in three dimension was mainly motivated by the conjecture. From our results, it will be clear that the conjecture  holds in three dimension. In addition, our results are of a generalization of the previously known facts on the homogeneous varieties of degree two.

\section{Notation, definitions, and  key lemmas}
Let $\mathbb F_q$ be a finite field with $q$ elements. We denote by $\mathbb F_q^d, d\geq 2, $ the $d$-dimensional vector space over the finite field $\mathbb F_q.$ Given a set $E\subset\mathbb F_q^d,$ we denote by $|E|$ the cardinality of the set $E.$ For nonnegative real numbers $A, B,$ we write $A\lesssim B$ if $A\leq CB $ for some $C>0$ independent of the size of the underlying finite field $\mathbb F_q.$ In other words, the constant $C>0$ is independent of the parameter $q.$ We also use  $A\sim B$ to indicate $A\lesssim B\lesssim A.$ We say that a polynomial $P(x)\in \mathbb F_q[x_1,\dots,x_d]$ is a homogeneous polynomial of degree $k$ if the polynomial's  monomials with nonzero coefficients all have the same total degree $k.$ For example, $P(x_1,x_2,x_3)= x_1^5+x_2^3x_3^2$ is a homogeneous polynomial of degree five. Given a homogeneous polynomial $P(x)\in \mathbb F_q[x_1,\dots,x_d],$ we define a homogeneous variety $H$ in $\mathbb F_q^d$ by the set
$$ H=\{x\in \mathbb F_q^d: P(x)=0\}.$$ For example, the cone in three dimension, which was introduced in \cite{MT04}, is a homogeneous variety generated by the homogeneous polynomial $P(x)=x_1^2-x_2x_3.$
We now review the Fourier transform of a function defined on $\mathbb F_q^d.$
Denote by $\chi$ the nontrivial additive character of $\mathbb F_q.$ For example, if $q$ is prime, then we may take $\chi(t)=e^{2\pi i t/q}$ where we identify $t\in \mathbb F_q$ with a usual integer. We now endow the space $\mathbb F_q^d$ with a normalized counting measure $dx.$
Thus, given a complex valued function $f:\mathbb F_q^d \rightarrow \mathbb C,$ the Fourier transform of $f$ is defined by
$$ \widehat{f}(m)=\int_{ \mathbb F_q^d} \chi(-m\cdot x) f(x) ~dx= \frac{1}{q^d} \sum_{x\in \mathbb F_q^d} \chi(-m\cdot x) f(x),$$
where $m$ is any element in the dual space of $(\mathbb F_q^d, dx).$ Recall that the Fourier transform $\widehat{f}$ is actually defined on the dual space of  $(\mathbb F_q^d, dx).$ We shall endow the dual space of $(\mathbb F_q^d, dx)$ with a counting measure $dm.$ We write $(\mathbb F_q^d, dm)$ for the dual space of $(\mathbb F_q^d, dx).$ Then, we also see that the Fourier inversion theorem says that for every $x\in (\mathbb F_q^d, dx),$
\begin{equation}\label{Finversion} f(x)=\int_{\mathbb F_q^d}\chi(x\cdot m) \widehat{f}(m)~dm= \sum_{m\in \mathbb F_q^d} \chi(x\cdot m) \widehat{f}(m).\end{equation}
We also recall the Plancherel theorem: $\|\widehat{f}\|_{L^2(\mathbb F_q^d,dm)}= \|f\|_{L^2(\mathbb F_q^d,dx)},$ which is same as
$$ \sum_{m\in \mathbb F_q^d} |\widehat{f}(m)|^2 = \frac{1}{q^d} \sum_{x\in \mathbb F_q^d} |f(x)|^2.$$
For instance, if $f$ is a characteristic function on the subset $E$ of $\mathbb F_q^d,$ then  the Plancherel theorem yields
\begin{equation}\label{Plancherel} \sum_{m\in \mathbb F_q^d} |\widehat{E}(m)|^2 = \frac{|E|}{q^d},\end{equation}
here, and throughout the paper, we identify the set $E \subset \mathbb F_q^d$ with the characteristic function on the set $E$, and we denotes by $|E|$ the cardinality of the set $E \subset \mathbb F_q^d.$

\begin{remark}\label{convension} For a simple notation, we use the notation $\mathbb F_q^d$ for both the space and its dual space. However, this may make readers confused, because the measures of the space and its dual space are different. To overcome this confusion, we always use the variable $``x"$ as an element of the space $(\mathbb F_q^d, dx)$ with the normalized counting measure $dx.$ For example, we write $x\in \mathbb F_q^d$ for $x\in (\mathbb F_q^d, dx).$ On the other hand, we always use the variable $``m"$ as an element of the dual space $(\mathbb F_q^d, dm)$ with a counting measure $dm.$
Thus, $m\in \mathbb F_q^d$ means that $m\in (\mathbb F_q^d, dm).$
\end{remark}

\subsection{Fourier decay on homogeneous varieties}
We shall estimate the Fourier transform of  characteristic functions on homogeneous varieties in three dimensional vector space over the finite field $\mathbb F_q.$
First, let us review the well-known Schwartz-Zippel lemma, which gives us the information about the cardinality of any variety in $\mathbb F_q^d.$ For a nice proof of the Schwartz-Zippel lemma below, see Theorem $6.13$ in \cite{LN93}.
\begin{lemma} [Schwartz-Zippel] Let $P(x)\in \mathbb F_q[x_1,\cdots,x_d]$ be a nonzero polynomial of degree $k.$ Then, we have
$$|\{x\in \mathbb F_q^d: P(x)=0\}|\leq k q^{d-1}.$$
\end{lemma}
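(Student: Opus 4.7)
The plan is to prove the lemma by induction on the number of variables $d$. The base case $d=1$ is the classical fact that a nonzero univariate polynomial of degree $k$ over a field has at most $k$ roots, giving the bound $k = k\,q^{0}$.

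For the inductive step, assuming the statement for $d-1$ variables, I would isolate the last variable by writing
$$ P(x_1,\dots,x_d) = \sum_{i=0}^{j} P_i(x_1,\dots,x_{d-1})\, x_d^{i},$$
where $j$ is the largest index with $P_j \not\equiv 0$. Note that $P_j$ is a nonzero polynomial in $d-1$ variables of degree at most $k - j$.

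Next I would partition the zero set of $P$ according to whether the leading coefficient $P_j(x_1,\dots,x_{d-1})$ vanishes. In the first case, $P_j(x_1,\dots,x_{d-1}) = 0$; by the induction hypothesis applied to $P_j$, there are at most $(k-j)q^{d-2}$ such choices of $(x_1,\dots,x_{d-1})$, and since $x_d$ is unrestricted this contributes at most $(k-j)q^{d-1}$ zeros of $P$. In the second case, $P_j(x_1,\dots,x_{d-1}) \neq 0$, so $P$ regarded as a univariate polynomial in $x_d$ has degree exactly $j$ and hence at most $j$ roots; since the number of such tuples $(x_1,\dots,x_{d-1})$ is at most $q^{d-1}$, this contributes at most $j q^{d-1}$ zeros. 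Adding the two cases yields $(k-j)q^{d-1} + jq^{d-1} = k q^{d-1}$, completing the induction.

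The argument is essentially bookkeeping; the only delicate point is the correct choice of the decomposition variable. I would pick $x_d$ (rather than an arbitrary variable) and define $j$ as the top nonzero exponent in $x_d$, because this is exactly what makes the estimate $\deg P_j \leq k - j$ hold and what ensures that $P$ has degree $j$ in $x_d$ on the complement. If instead one splits off an ``active'' variable chosen cavalierly, the degree accounting for $P_j$ can be spoiled. Beyond that subtlety there is no real obstacle: the induction closes cleanly and no special structure of $\mathbb{F}_q$ is used beyond being a field.
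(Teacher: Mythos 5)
Your induction is correct: the decomposition $P=\sum_{i\le j}P_i(x_1,\dots,x_{d-1})x_d^{\,i}$ with $j$ the top nonzero power of $x_d$ gives $\deg P_j\le k-j$, the case split on whether $P_j$ vanishes yields the contributions $(k-j)q^{d-1}$ and $jq^{d-1}$, and the two degenerate situations ($j=0$, or $P_j$ a nonzero constant) are harmlessly absorbed since the bounds are monotone in the degree. Note, however, that the paper does not prove this lemma at all: it simply invokes it, referring the reader to Theorem $6.13$ of Lidl and Niederreiter \cite{LN93} for a proof. So your argument is not "the paper's proof rederived" but a self-contained substitute for the citation, namely the standard one-variable-at-a-time induction (essentially the Schwartz--Zippel counting argument specialized to the full box $\mathbb F_q^d$). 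What your route buys is that the statement becomes elementary and verifiable in a few lines using nothing beyond the fact that a nonzero univariate polynomial of degree $j$ over a field has at most $j$ roots, which is entirely adequate for the way the lemma is used in the paper (only the order of magnitude $|H|\lesssim q^{d-1}$ matters); what the citation buys the authors is brevity and a pointer to a reference where sharper statements about zero counts of polynomials over finite fields are developed systematically.
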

Using the Schwartz-Zippel lemma, we obtain the following lemma.
\begin{lemma}\label{intersection}
Given a nonzero homogeneous polynomial $P(x)\in \mathbb F_q[x_1,x_2,x_3]$, let $H$ be the homogeneous variety given by
$$ H=\{x\in \mathbb F_q^3: P(x)=0\}.$$
If the homogeneous variety $H$ does not contain any plane passing through the origin, then we have
for every $m\in \mathbb F_q^3\setminus \{(0,0,0)\}, $
$$ |H\cap \Pi_m|\lesssim q,$$
where $\Pi_m=\{ x\in \mathbb F_q^3: m\cdot x=0\}$ which is a hyperplane passing through the origin.
\end{lemma}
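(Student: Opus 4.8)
The plan is to reduce the estimate to the Schwartz–Zippel lemma applied to a polynomial in two variables obtained by restricting $P$ to the plane $\Pi_m$. Fix $m \in \mathbb F_q^3 \setminus \{(0,0,0)\}$. Since $\Pi_m$ is a two-dimensional subspace of $\mathbb F_q^3$, I can choose a linear parametrization $\phi : \mathbb F_q^2 \to \Pi_m$, i.e.\ $\phi(s,t) = s v_1 + t v_2$ where $\{v_1, v_2\}$ is a basis of $\Pi_m$; this $\phi$ is a bijection. Then $|H \cap \Pi_m| = |\{(s,t) \in \mathbb F_q^2 : Q(s,t) = 0\}|$ where $Q(s,t) := P(\phi(s,t)) = P(sv_1 + tv_2) \in \mathbb F_q[s,t]$. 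Because $P$ is homogeneous of degree $k$ and $\phi$ is linear, $Q$ is also homogeneous of degree at most $k$ (in fact exactly $k$ unless it is identically zero).

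The key dichotomy is whether $Q$ is the zero polynomial. If $Q \not\equiv 0$, then Schwartz–Zippel immediately gives $|\{(s,t) : Q(s,t)=0\}| \le (\deg Q)\, q \le k q \lesssim q$, which is the desired bound. So the whole argument hinges on ruling out $Q \equiv 0$. But $Q \equiv 0$ means $P$ vanishes identically on $\Pi_m$, i.e.\ the plane $\Pi_m$ (which passes through the origin) is contained in $H$ — and this is exactly what the hypothesis of the lemma forbids. Hence $Q \not\equiv 0$ for every nonzero $m$, and the bound follows.

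I expect the only genuinely delicate point to be the bookkeeping around the homogeneity and degree of $Q$: one should check that substituting a linear change of variables into a homogeneous polynomial of degree $k$ produces a polynomial that is again homogeneous of degree $k$ (or zero), so that $\deg Q \le k$ and the Schwartz–Zippel bound reads $|H \cap \Pi_m| \le k q$. This is routine — each monomial $x_1^{a_1} x_2^{a_2} x_3^{a_3}$ with $a_1+a_2+a_3 = k$ maps under $x = \phi(s,t)$ to a homogeneous polynomial of degree $k$ in $(s,t)$ — but it is the step where one must be careful, since a naive application of Schwartz–Zippel to $Q$ viewed as a polynomial in $\mathbb F_q^2$ would otherwise require knowing its degree. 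Everything else is a direct translation of the hypothesis "$H$ contains no plane through the origin" into the statement "$P|_{\Pi_m} \not\equiv 0$," after which Schwartz–Zippel does the work. The implied constant in $\lesssim q$ is just $k$, the degree of $P$, which is independent of $q$ as required.
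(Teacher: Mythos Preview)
Your proof is correct and follows essentially the same approach as the paper: parametrize the plane $\Pi_m$ linearly by $\mathbb F_q^2$, restrict $P$ to obtain a two-variable polynomial, argue that the hypothesis forces this restriction to be nonzero, and apply Schwartz--Zippel. The only cosmetic difference is that the paper chooses a specific parametrization (normalizing one coordinate of $m$ to $-1$ and using $(x_1,x_2)$ as parameters) whereas you use an arbitrary basis $\{v_1,v_2\}$ of $\Pi_m$; the logic is identical.
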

\begin{proof}
First, let us observe the set of $H\cap \Pi_m.$
Fix $m\neq (0,0,0).$ Without loss of generality, we may assume that $m=(m_1,m_2, -1).$ Then, we see that
$$ \Pi_m=\{x\in \mathbb F_q^3: m_1x_1+m_2x_2 - x_3=0\}.$$
and
$$ H=\{x\in \mathbb F_q^3: P(x)=0\}.$$
Thus, we see that
$$H\cap \Pi_m=\{(x_1,x_2, m_1x_1+m_2x_2)\in \mathbb F_q^3: P(x_1,x_2, m_1x_1+m_2x_2)=0\}.$$
Put $R(x_1,x_2)=P(x_1,x_2, m_1x_1+m_2x_2).$ Then, it is clear that
$$ |H\cap \Pi_m|=| \{(x_1,x_2)\in \mathbb F_q^2: R(x_1,x_2)=0\}|.$$
If $R(x_1,x_2)$ is a nonzero polynomial, then the Schwartz-Zippel lemma tells us that $ |H\cap \Pi_m|\lesssim q$ and we complete the proof.
Now assume $R(x_1,x_2)$ is a zero polynomial.
Then, it follows that $R(x_1,x_2)=P(x_1,x_2, m_1x_1+m_2x_2)=0$ for all $x_1,x_2 \in \mathbb F_q.$ This implies that the variety $H=\{x\in \mathbb F_q^3: P(x)=0\}$ contains a plane
$ m_1x_1+m_2x_2-x_3=0,$ which contradicts to our hypothesis that $ H$ does not contain any plane passing through the origin. Thus, the proof is complete.
\end{proof}

In order to compute the Fourier transform on homogeneous varieties, we shall need the following Lemma \ref{formulaFourier}. We remark that the proof of Lemma \ref{formulaFourier} below adopts the invariant property of homogeneous varieties which was already observed before (see \cite{Co94}). For readers' convenience, we state the lemma in a slightly different way and give an explicit proof here.
\begin{lemma}\label{formulaFourier} Let $P(x)\in \mathbb F_q[x_1,\dots,x_d]$ be a nonzero homogeneous polynomial.
Define a homogeneous variety $H \subset \mathbb F_q^d$ by
$$H=\{x\in \mathbb F_q^d: P(x)=0\}.$$
For each $m\in \mathbb F_q^d,$ we have
\begin{equation}\label{sample1} \widehat{H}(m)= \frac{1}{q^{d+1}-q^d} \left( q|H\cap \Pi_m|-|H|\right),\end{equation}
where $\Pi_m=\{x\in \mathbb F_q^d: m\cdot x=0\}.$\end{lemma}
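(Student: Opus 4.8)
The plan is to exploit the single structural feature that distinguishes homogeneous varieties: if $P$ is homogeneous of degree $k\geq 1$, then $P(0)=0$ and $P(tx)=t^kP(x)$ for every $t\in\mathbb F_q$, so $x\in H$ forces $tx\in H$; in particular, for each fixed $t\in\mathbb F_q^\ast$ the dilation $x\mapsto tx$ is a bijection of $H$ onto itself. First I would use this to record the scaling invariance $\widehat{H}(m)=\widehat{H}(tm)$ for all $t\in\mathbb F_q^\ast$: starting from $\widehat{H}(m)=q^{-d}\sum_{x\in H}\chi(-m\cdot x)$ and replacing the summation variable $x$ by $tx$ gives $q^{-d}\sum_{x\in H}\chi(-(tm)\cdot x)=\widehat{H}(tm)$. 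Thus $\widehat H$ is constant on every punctured line through the origin.

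Next I would evaluate, for a fixed $m\in\mathbb F_q^d$, the sum of $\widehat H$ over the line $\{tm:t\in\mathbb F_q\}$ in two different ways. On one hand, interchanging the order of summation and using that $\sum_{t\in\mathbb F_q}\chi(-t\,m\cdot x)$ equals $q$ when $m\cdot x=0$ and $0$ otherwise (orthogonality of the additive character), one gets
$$\sum_{t\in\mathbb F_q}\widehat{H}(tm)=\frac{1}{q^d}\sum_{x\in H}\sum_{t\in\mathbb F_q}\chi(-t\,m\cdot x)=\frac{q}{q^d}\,|H\cap\Pi_m|.$$
On the other hand, when $m\neq(0,\dots,0)$ the scaling invariance gives $\widehat H(tm)=\widehat H(m)$ for every $t\neq 0$, and $\widehat H(0)=q^{-d}|H|$, so the same sum equals $q^{-d}|H|+(q-1)\widehat H(m)$.

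Equating the two expressions and solving for $\widehat H(m)$ yields
$$\widehat H(m)=\frac{q\,|H\cap\Pi_m|-|H|}{q^d(q-1)}=\frac{q\,|H\cap\Pi_m|-|H|}{q^{d+1}-q^d},$$
which is precisely \eqref{sample1}. It remains to treat the degenerate case $m=(0,\dots,0)$ separately, but this is immediate: then $\Pi_0=\mathbb F_q^d$, so $|H\cap\Pi_0|=|H|$ and the right-hand side of \eqref{sample1} collapses to $(q-1)|H|/(q^d(q-1))=|H|/q^d=\widehat H(0)$.

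I do not expect a genuine obstacle here; the computation is short once homogeneity is used to obtain the ray-invariance of $\widehat H$. The only points needing mild care are checking that $x\mapsto tx$ is indeed a bijection of $H$ (a one-line consequence of $P$ being homogeneous) and isolating the case $m=0$, where the line through the origin collapses to a point and the counting identity must be verified by hand.
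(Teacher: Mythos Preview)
Your proof is correct and is essentially the same as the paper's: both establish the ray-invariance $\widehat H(m)=\widehat H(tm)$ from homogeneity, then average over $t$ and invoke orthogonality of $\chi$; the only cosmetic difference is that the paper sums over $t\neq 0$ and then adds back the $t=0$ term, whereas you sum over all $t\in\mathbb F_q$ and split off $\widehat H(0)$ on the other side. Your explicit check of the case $m=0$ is a small bonus the paper leaves implicit.
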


\begin{proof} For each $m\in \mathbb F_q^d,$ we have
$$ \widehat{H}(m)=q^{-d} \sum_{x\in  H} \chi(-m\cdot x).$$
Since $P(x)$ is a homogeneous polynomial, a change of the variable yields that for each $t\neq 0$,
$$ \widehat{H}(m)=\widehat{H}(tm).$$
It therefore follows that
$$\widehat{H}(m)= q^{-d}(q-1)^{-1}\sum_{x\in H}\sum_{t\in \mathbb F_q\setminus \{0\}} \chi(-tm\cdot x)$$
$$= q^{-d}(q-1)^{-1}\sum_{x\in H}\sum_{t\in \mathbb F_q} \chi(-tm\cdot x)- q^{-d}(q-1)^{-1} |H|.$$
By the orthogonality relation of nontrivial additive character $\chi$, we complete the proof.
\end{proof}

From lemma \ref{intersection} and \ref{formulaFourier}, the Fourier transform on homogeneous varieties in three dimension can be estimated. The following corollary shall make a crucial role for proving our results in this paper.
\begin{corollary}\label{Maincor} Suppose  the homogeneous variety
$ H=\{x\in \mathbb F_q^3: P(x)=0\}$ does not contain any plane passing through the origin in $\mathbb F_q^3,$
where $P(x)$ is a homogeneous polynomial in $\mathbb F_q[x_1,x_2,x_3].$
Then, for any $m\neq (0,0,0),$ we have
\begin{equation}\label{decayvariety} |\widehat{H}(m)|=\left| \frac{1}{q^3}\sum_{x\in H} \chi(-m\cdot x)\right|\lesssim q^{-2}.\end{equation}
\end{corollary}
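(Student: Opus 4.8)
The plan is to obtain this as an immediate consequence of the two preceding lemmas, with only the trivial size bound on $H$ supplied by Schwartz--Zippel added in. First I would specialize Lemma \ref{formulaFourier} to the case $d=3$, which gives, for every $m\in \mathbb F_q^3$,
$$\widehat{H}(m)=\frac{1}{q^{4}-q^{3}}\left(q\,|H\cap \Pi_m|-|H|\right),$$
so that $|\widehat{H}(m)|\le \frac{1}{q^{4}-q^{3}}\left(q\,|H\cap \Pi_m|+|H|\right)$. Thus it suffices to show that, for $m\neq(0,0,0)$ and under the hypothesis that $H$ contains no plane through the origin, both quantities $q\,|H\cap \Pi_m|$ and $|H|$ are $O(q^{2})$.

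For the second term, since $P$ is a nonzero polynomial whose degree $k$ is a fixed constant (independent of $q$), the Schwartz--Zippel lemma yields $|H|\le k\,q^{2}\lesssim q^{2}$. For the first term, the hypothesis that $H$ does not contain any plane passing through the origin is precisely what is required to invoke Lemma \ref{intersection}, which gives $|H\cap \Pi_m|\lesssim q$ for every nonzero $m$; hence $q\,|H\cap \Pi_m|\lesssim q^{2}$. Substituting these two bounds into the displayed identity and using $q^{4}-q^{3}\sim q^{4}$, I conclude $|\widehat{H}(m)|\lesssim q^{2}/q^{4}=q^{-2}$.

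There is no genuine obstacle at this stage: the real content has already been extracted in Lemmas \ref{formulaFourier} and \ref{intersection} (the character/invariance computation and the reduction of the line count to Schwartz--Zippel), and the corollary is just the bookkeeping of combining them. The only point that deserves a word of care is that the implied constants here depend on $\deg P$ but not on $q$, which is exactly what is permitted by our convention for the symbol $\lesssim$; in particular the bound is uniform in $q$ as claimed.
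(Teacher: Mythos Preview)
Your proof is correct and follows exactly the same approach as the paper: specialize Lemma~\ref{formulaFourier} to $d=3$, bound $|H|\lesssim q^2$ by Schwartz--Zippel (using that $P$ is nonzero, which follows since otherwise $H=\mathbb F_q^3$ would contain every plane through the origin), bound $|H\cap\Pi_m|\lesssim q$ by Lemma~\ref{intersection}, and combine. The paper's proof is just a one-line reference to these ingredients, and you have merely spelled out the arithmetic.
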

\begin{proof} Since the homogeneous variety $H$ does not contain any plane passing through the origin, it is clear that the polynomial $P(x)\in \mathbb F_q[x_1,x_2,x_3]$ is a nonzero polynomial. Thus, the Schwartz-Zippel lemma says that $|H|\lesssim q^2$ and so Corollary \ref{Maincor} follows immediately from Lemma \ref{formulaFourier} and Lemma \ref{intersection}.
\end{proof}
\begin{remark}\label{inversionremark} Let $d\sigma$ be the normalized surface measure of the homogeneous variety $H\subset \mathbb F_q^3$ given in Corollary \ref{Maincor}.
Then, we notice that if $|H|\sim q^2,$ then the conclusion $(\ref{decayvariety})$ in Corollary \ref{Maincor} implies that for every $m\in \mathbb F_q^3\setminus \{(0,0,0)\},$
\begin{equation} \label{inversiondecay}|(d\sigma)^\vee(m)|=\left| \frac{1}{|H|} \sum_{x\in H} \chi(m\cdot x)\right|\lesssim q^{-1}.\end{equation}
\end{remark}

\section{Extension problems for finite fields}
In the finite field setting, Mockenhaupt and Tao \cite{MT04} first set up and studied the extension problem for various algebraic varieties.
Here, we review the definition of the extension problem for finite fields and introduce our main result on the problem for homogeneous varieties in three dimension. For a fixed polynomial $P(x)\in \mathbb F_q[x_1,\cdots,x_d],$  consider an algebraic variety $V$ in $\mathbb F_q^d, d\geq 2, $ given by  $$ V=\{x\in \mathbb F_q^d: P(x)=0\}.$$
From Remark \ref{convension}, recall that  the variety $V$ is considered as a subset of the space $(\mathbb F_q^d, dx)$ with the normalized counting measure $dx.$ Therefore, if $f: (\mathbb F_q^d, dx) \rightarrow \mathbb C$ is a complex valued function, then for $1\leq p< \infty$ the $L^p-$norm of $f$ takes the following value:
$$ \|f\|_{L^p(\mathbb F_q^d, dx)}= \left( \frac{1}{q^d} \sum_{x\in \mathbb F_q^d} |f(x)|^p \right)^{\frac{1}{p}}.$$
As usual, $\|f\|_{L^\infty (\mathbb F_q^d, dx)}$ is the maximum value of $|f|.$ We now endow the variety $V$ with the normalized surface measure $\sigma$ such that the total mass of $V$ is one. In other words, the surface measure $\sigma$ supported on $V$ can be defined by the relation
\begin{equation}\label{alex} d\sigma(x)= \frac{q^d}{|V|} V(x)~ dx,\end{equation}
here, recall that we identify the set $V \subset \mathbb F_q^d$ with the characteristic function $\chi_{V}$ on the set $V$.
Thus, we see that
$$ \|f\|_{L^p(V, \sigma)} =\left(\int_{ V} |f(x)|^p d\sigma(x)\right)^{\frac{1}{p}}= \left( \frac{1}{|V|} \sum_{x\in V} |f(x)|^p \right)^{\frac{1}{p}},$$
and the inverse Fourier transform of measure $fd\sigma$ is given by
$$ (fd\sigma)^{\vee}(m)=\int_{V} \chi(m\cdot x) f(x)~d\sigma(x)= \frac{1}{|V|} \sum_{x\in V} \chi(m\cdot x) f(x),$$
where we recall that $m$ is an element of the dual space $(\mathbb F_q^d, dm)$ with the counting measure $dm.$
In addition, note that for $1\leq p,r<\infty,$
$$\| (fd\sigma)^\vee \|_{L^r(\mathbb F_q^d, dm)} = \left(\int_{\mathbb F_q^d} | (fd\sigma)^\vee(m)|^r dm\right)^{\frac{1}{r}}
= \left( \sum_{m\in \mathbb F_q^d} |(fd\sigma)^\vee(m)|^r \right)^{\frac{1}{r}}$$
and $\| (fd\sigma)^\vee \|_{L^\infty(\mathbb F_q^d, dm)}$ takes the maximum value of $ |(fd\sigma)^\vee|.$
\subsection{Definition of the extension theorem}
Let $1\leq p,r\leq \infty.$ We denote by $R^*(p\to r)$ to be the smallest constant such that for all functions $f$ on $V,$
$$ \| (fd\sigma)^\vee \|_{L^r(\mathbb F_q^d, dm)} \leq R^*(p\to r) \|f\|_{L^p(V, d\sigma)}.$$
By duality, we note that the quantity $R^*(p\to r)$ is also the best constant such that the following restriction estimate holds: for every function $g$ on $({\mathbb F_q^d},dm),$
\begin{equation}\label{restrictiondef}
\|\widehat{g}\|_{L^{p'}(V,d\sigma)}\leq R^*(p\to r) \|g\|_{L^{r'}({\mathbb F_q^d}, dm)},
\end{equation}
where $p'$ and $r'$ denote the dual exponents of $p$ and $r$ respectively, which mean that  $1/p+1/p'=1$ and $1/r+1/r'=1.$

Observe that $R^*(p\to r)$ is always a finite number but it may depend on the parameter $q,$ the size of the underlying finite field $\mathbb F_q.$
In the finite field setting, the extension problem is to determine the exponents $1\leq p,r\leq \infty$ such that
$$ R^*(p\to r)\leq C,$$
where the constant $C>0$ is independent of $q.$ A direct calculation yields the trivial estimate, $R^*(1\to \infty)\lesssim 1.$
Using H\"{o}lder's inequality and the nesting properties of $L^p$-norms, we also see that
$$ R^*(p_1\to r)\leq R^*(p_2\to r)\quad\mbox{for}~~1\leq p_2\leq p_1\leq \infty$$
and
$$ R^*(p\to r_1)\leq R^*(p\to r_2)\quad \mbox{for}~~ 1\leq r_2\leq r_1\leq \infty.$$
In order to obtain the strong result on the restriction problem, for each $1\leq p (\mbox{or}~r) \leq \infty$, we shall try to find the smallest number $1\leq r~(\mbox{or}~ p)~ \leq \infty$ such that
$R^*(p\to r) \lesssim 1.$ In addition, using the interpolation theorem, it therefore suffices to find the critical exponents $1\leq p, r\leq \infty.$
\subsection{Necessary conditions for $R^*(p\to r) \lesssim 1$}
In \cite{MT04}, Mockenhaupt and Tao showed that if $|V|\sim q^{d-1}$ and the variety $V\subset \mathbb F_q^d$ contains
an $\alpha$-dimensional affine subspace $\Pi (|\Pi|=q^\alpha),$ then  the necessary conditions for $R^*(p\to r)\lesssim 1$ are given by
\begin{equation}\label{necessary1} r\geq \frac{2d}{d-1}\quad\mbox{and}~~
r\geq \frac{p(d-\alpha)}{(p-1)(d-1-\alpha)}.\end{equation}
Now, let us consider the homogeneous variety $H=\{x\in \mathbb F_q^3: P(x)=0\}$ in three dimension where $P(x)\in \mathbb F_q[x_1,x_2,x_3]$ is a homogeneous polynomial. In addition, assume that $|H|\sim q^2.$ It is clear that the homogeneous variety $H$ contains a line, because if $P(x_0)=0$ for some $x_0\neq (0,0,0)$, then $P(tx_0)=0$ for all $t\in \mathbb F_q.$ From (\ref{necessary1}), we therefore see that the necessary conditions for $R^*(p\to r)\lesssim 1$ take the following:
$$r\geq 3 \quad\mbox{and}~~
r\geq \frac{2p}{p-1}.$$
In particular, if $H=\{x\in \mathbb F_q^3: x_1^2-x_2x_3=0\}$ which is a cone in three dimension, then above necessary conditions can be improved by the conditions:
$$ r\geq 4 \quad\mbox{and} ~~ r\geq \frac{2p}{p-1}.$$
This was proved by Mockenhaupt and Tao (see Proposition $7.1$ in \cite{MT04}). Moreover, they proved that $R^*(2\to 4)\lesssim 1$ which implies that the necessary conditions are in fact sufficient conditions. Thus, the $L^2-L^4$ extension estimate can be considered as the generally best possible result for the extension problems related to arbitrary homogeneous varieties in three dimensions. It is unknown whether there exists a homogeneous variety in
$\mathbb F_q^3$ to yield the better extension estimates than the conical extension estimates. However, it is easy to see that there exists a homogeneous variety on which the best possible extension estimates are worser than the conical extension estimates. For example, if we take a homogeneous variety  as
$H=\{x\in \mathbb F_q^3: x_1+x_2+x_3=0\},$
then $H$ contains a plane, two dimensional subspace, and so the necessary conditions in (\ref{necessary1}) say that only trivial $L^p-L^\infty, 1\leq p\leq \infty,$ estimates hold. From this example, one may ask which homogeneous varieties in $\mathbb F_q^3$ yield as good extension estimates  as the conical extension estimates? In the following section, our main result on the extension problems will answer this question.
\section{Main result on the extension problems}
We prove that if the homogeneous variety in $\mathbb F_q^3$ does not contain any plane passing through the origin, then the extension estimates are as good as  the conical extension estimates. More precisely, we have the following main result.
\begin{theorem}\label{mainextensionthm} For each homogeneous polynomial $P(x)\in \mathbb F_q[x_1,x_2,x_3]$, let $H=\{x\in \mathbb F_q^3: P(x)=0\}.$
Suppose that $|H|\sim q^2$ and the homogeneous variety $H$ does not contain any plane passing through the origin. Then, we have the following extension estimate on $H$:
$$ R^*(2\to 4)\lesssim 1.$$
\end{theorem}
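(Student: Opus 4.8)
The plan is to prove the $L^2 \to L^4$ extension estimate by the standard $TT^*$ / $MT$ (Mockenhaupt–Tao) method, exploiting the strong Fourier decay on $H$ obtained in Corollary \ref{Maincor}. First I would observe that, by duality and interpolation, it suffices to prove the restriction estimate in the dual form, or equivalently to bound $\|(fd\sigma)^\vee\|_{L^4(\mathbb F_q^3, dm)}$. Since $L^4$ is an even integer exponent, I would write
$$\|(fd\sigma)^\vee\|_{L^4}^4 = \|(fd\sigma)^\vee (fd\sigma)^\vee\|_{L^2}^2 = \|\big((fd\sigma)\ast(fd\sigma)\big)^\vee\|_{L^2}^2,$$
and apply Plancherel to reduce everything to an $L^2$ estimate for the convolution $d\sigma \ast d\sigma$ on $\mathbb F_q^3$. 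Concretely, expanding the square, the key quantity to control is
$$\frac{1}{|H|^4}\sum_{m\in\mathbb F_q^3}\Big|\sum_{\substack{x,y\in H}}\chi(m\cdot(x+y))\, f(x)f(y)\Big|^2,$$
which after Plancherel becomes a weighted count of the number of solutions to $x+y = x'+y'$ with $x,y,x',y'\in H$, weighted by $f$. By Cauchy–Schwarz it suffices to bound $\|f\|_{L^2(H,d\sigma)}^4$ times the operator norm of convolution with $d\sigma$, i.e. to show that the additive energy of $H$ is controlled: $|\{(x,y,x',y')\in H^4 : x+y=x'+y'\}| \lesssim q^{5}$, equivalently $d\sigma\ast d\sigma(z) \lesssim q^{-1}\cdot(\text{number of representations})$ has the right $L^2$ bound. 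In terms of the surface measure this amounts to showing $\|d\sigma\ast d\sigma\|_{L^2(\mathbb F_q^3,dz)}^2 \lesssim q^{-3}$, which by Plancherel is exactly $\sum_{m}|(d\sigma)^\vee(m)|^4 \lesssim q^{-3}$.

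The second step is to verify this $\ell^4$ bound on the Fourier transform of the measure. Using $|H|\sim q^2$ and Remark \ref{inversionremark}, we have $(d\sigma)^\vee(0)=1$ and $|(d\sigma)^\vee(m)| \lesssim q^{-1}$ for all $m\neq 0$. Therefore
$$\sum_{m\in\mathbb F_q^3} |(d\sigma)^\vee(m)|^4 = 1 + \sum_{m\neq 0}|(d\sigma)^\vee(m)|^4 \lesssim 1 + q^{-2}\sum_{m\neq 0}|(d\sigma)^\vee(m)|^2.$$
By Plancherel (equation \eqref{Plancherel} applied to the characteristic function of $H$, rescaled by $q^3/|H|$), $\sum_m |(d\sigma)^\vee(m)|^2 = \frac{q^3}{|H|} \sim q$, so the tail sum is $\lesssim q^{-2}\cdot q = q^{-1}$, and the whole sum is $\lesssim 1$. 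This is not quite $q^{-3}$; the point is that the "$1$" from $m=0$ does not contribute to the genuinely oscillatory part. So I would be more careful: separate the $m=0$ term at the very beginning, writing $(fd\sigma)^\vee = \widehat{f d\sigma}$-average plus fluctuation, i.e. use the decomposition $d\sigma = dx + (d\sigma - dx)$ where $dx$ is (a multiple of) Lebesgue measure; convolution with $dx$ contributes only to the zero frequency and is handled trivially by $\|f\|_{L^1(d\sigma)}\le\|f\|_{L^2(d\sigma)}$, while the fluctuation $\nu := d\sigma - dx$ satisfies $\widehat\nu(0)=0$ and $|\widehat\nu(m)|\lesssim q^{-1}$ for all $m$, hence $\sum_m|\widehat\nu(m)|^4 \lesssim q^{-2}\sum_m|\widehat\nu(m)|^2 \lesssim q^{-2}\cdot q^{-1} = q^{-3}$, which is the bound needed to run the $TT^*$ argument and obtain $R^*(2\to 4)\lesssim 1$.

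The main obstacle, and the place where one must be genuinely careful rather than merely routine, is organizing the splitting of the measure so that the zero-frequency ("non-oscillatory") part and the oscillatory part are each estimated by the correct mechanism — the former by crude $L^p$-nesting on $H$, the latter by the decay $|\widehat H(m)|\lesssim q^{-2}$ from Corollary \ref{Maincor} combined with Plancherel. Everything else — expanding the $L^4$ norm, applying Plancherel twice, and the final Cauchy–Schwarz — is bookkeeping. I expect the clean way to present it is: bound $\|(fd\sigma)^\vee\|_{L^4}^2 = \|(fd\sigma)\ast(fd\sigma)\|_{L^2(dm)}$... more precisely, estimate $\|(f d\sigma)^\vee\|_{L^4}^4$ via Plancherel by $\sum_z |(f^2 d\sigma)\ast\cdots|$-type sum and dominate the off-diagonal contribution using $|\widehat{H}| \lesssim q^{-2}$ and the fact that the number of lines in $H$ through the origin is $O(q)$ (Lemma \ref{intersection}), concluding $R^*(2\to 4)\lesssim 1$.
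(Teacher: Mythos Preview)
Your ingredients are the right ones --- the pointwise decay $|(d\sigma)^\vee(m)|\lesssim q^{-1}$ for $m\neq 0$ from Remark~\ref{inversionremark} and the splitting $(d\sigma)^\vee=\delta_0+K$ (equivalently $d\sigma=dx+\nu$) --- and these are exactly what the paper's Tomas--Stein proof uses. But the way you assemble them has a genuine gap. The step ``by Cauchy--Schwarz it suffices to bound the additive energy of $H$'' (equivalently $\sum_m|(d\sigma)^\vee(m)|^4$, or $\|d\sigma\ast d\sigma\|_{L^2}$) is not justified: that quantity is only the $f\equiv 1$ instance of the inequality $\|(fd\sigma)^\vee\|_{L^4}\lesssim\|f\|_{L^2(d\sigma)}$, and controlling it does not control the general case. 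If you expand the $L^4$ norm and apply Cauchy--Schwarz to the inner sum $\sum_{x+y=\xi}f(x)f(y)$, the factor that appears is $\max_\xi N(\xi)$ with $N(\xi)=|H\cap(\xi-H)|$; after separating $\xi=0$ you still need the \emph{pointwise} bound $N(\xi)\lesssim q$ for every $\xi\neq 0$, which is strictly stronger than the $\ell^2$ bound $\sum_\xi N(\xi)^2\lesssim q^5$ you are aiming for. The paper's first proof obtains that pointwise bound geometrically (the lemma immediately after Lemma~\ref{L2L4formula}), using that $H$ is a union of $\lesssim q$ lines through the origin together with Lemma~\ref{intersection}.

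If you want a purely Fourier proof, the correct $TT^*$ argument does not pass through $\sum_m|K(m)|^4$. By duality it suffices to show $\|g\ast K\|_{L^4(dm)}\lesssim\|g\|_{L^{4/3}(dm)}$, and this follows by interpolating the two endpoints
\[
\|g\ast K\|_{L^\infty}\le\|K\|_{L^\infty}\,\|g\|_{L^1}\lesssim q^{-1}\|g\|_{L^1}
\quad\text{and}\quad
\|g\ast K\|_{L^2}\le\|\widehat K\|_{L^\infty}\,\|g\|_{L^2}\lesssim q\,\|g\|_{L^2},
\]
the second using $\widehat K(x)=q^3|H|^{-1}1_H(x)-1\lesssim q$. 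Your $\ell^4$ bound on $K$ is a consequence of these endpoint estimates but cannot replace them. Two numerical slips as well: $\sum_m|K(m)|^2\sim q$, not $q^{-1}$; and for the $f\equiv 1$ test the correct target is $\sum_m|(d\sigma)^\vee(m)|^4\lesssim 1$, not $q^{-3}$.
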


We shall give two different proofs of Theorem \ref{mainextensionthm}. One is based on geometric approach and the other is given in view of the Fourier decay on the homogeneous variety in three dimension.
\subsection{The proof of Theorem \ref{mainextensionthm} based on geometric approach}
In order to show that $R^*(2\to 4)\lesssim 1,$ we shall use the following well-known lemma for reduction, which is basically to estimating the incidences between the variety and its nontrivial translations.
For a complete proof of the following lemma, see both Lemma $5.1$ in \cite{MT04} and Lemma 13 in \cite{IK08}.
\begin{lemma}\label{L2L4formula} Let $V$ be any algebraic variety in $\mathbb F_q^d, d\geq 2,$ with $|V|\sim q^{d-1}.$
Suppose that for every $\xi\in \mathbb F_q^d\setminus \{(0,\dots,0)\},$
$$ \sum_{(x,y)\in V\times V: x+y=\xi } 1 \lesssim q^{d-2}.$$
Then, we have
$$ R^*(2\to 4) \lesssim 1.$$
\end{lemma}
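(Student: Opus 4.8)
The plan is to reduce the claimed bound $R^*(2\to4)\lesssim 1$ — equivalently, $\|(fd\sigma)^\vee\|_{L^4(\mathbb F_q^3,dm)}\lesssim\|f\|_{L^2(V,d\sigma)}$ for all $f$ on $V$ — to the additive hypothesis on $V$ by the classical trick of squaring the extension operator. Writing $g=(fd\sigma)^\vee$, I would first use $\|g\|_{L^4(\mathbb F_q^d,dm)}^4=\sum_{m\in\mathbb F_q^d}|g(m)^2|^2$ together with the expansion $g(m)^2=|V|^{-2}\sum_{x,y\in V}\chi(m\cdot(x+y))f(x)f(y)$. Grouping the pairs $(x,y)$ by the value $\xi=x+y$ and applying the Plancherel theorem on the dual side converts the whole quantity into a sum over additive representations:
$$\|(fd\sigma)^\vee\|_{L^4(\mathbb F_q^d,dm)}^4=\frac{q^d}{|V|^4}\sum_{\xi\in\mathbb F_q^d}\Big|\sum_{\substack{x,y\in V\\ x+y=\xi}}f(x)f(y)\Big|^2.$$
After this step the problem becomes purely combinatorial, governed by the representation function $r_V(\xi)=\#\{(x,y)\in V\times V:\,x+y=\xi\}$.

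Next I would estimate the right-hand side by Cauchy--Schwarz applied to the inner sum, $\big|\sum_{x+y=\xi}f(x)f(y)\big|^2\le r_V(\xi)\sum_{x+y=\xi}|f(x)|^2|f(y)|^2$ (all sums over $x,y\in V$), and split the outer sum into the off-diagonal part $\xi\neq0$ and the diagonal $\xi=0$. On the off-diagonal the hypothesis $r_V(\xi)\lesssim q^{d-2}$ applies; since moreover $\sum_{\xi\in\mathbb F_q^d}\sum_{x+y=\xi}|f(x)|^2|f(y)|^2=\big(\sum_{x\in V}|f(x)|^2\big)^2$, the off-diagonal contribution is at most a constant times $q^d|V|^{-4}q^{d-2}\big(\sum_{x\in V}|f(x)|^2\big)^2$. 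Using $\sum_{x\in V}|f(x)|^2=|V|\,\|f\|_{L^2(V,d\sigma)}^2$ and $|V|\sim q^{d-1}$, this collapses exactly to $\lesssim\|f\|_{L^2(V,d\sigma)}^4$.

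It remains to handle the diagonal $\xi=0$, about which the hypothesis is silent — indeed $r_V(0)=|V\cap(-V)|$ can be as large as $|V|$, for instance when $V=-V$. Here I would argue by hand: $\big|\sum_{x\in V\cap(-V)}f(x)f(-x)\big|^2\le\big(\sum_{x\in V}|f(x)|^2\big)^2=|V|^2\|f\|_{L^2(V,d\sigma)}^4$, so this term contributes $q^d|V|^{-2}\|f\|_{L^2(V,d\sigma)}^4\sim q^{2-d}\|f\|_{L^2(V,d\sigma)}^4$, and $q^{2-d}\le1$ precisely because $d\ge2$. Adding the two contributions yields $R^*(2\to4)\lesssim 1$. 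I do not expect a genuine obstacle: all of the content sits in the input bound $r_V(\xi)\lesssim q^{d-2}$, and once the $L^4$ norm is opened up the rest is one application of Cauchy--Schwarz. The only things to watch are the normalization bookkeeping in the Plancherel step (the powers of $q$ and $|V|$ produced by $d\sigma$ and by the convolution) and the fact that the diagonal term, while not covered by the hypothesis, is still absorbed — if only just — by the dimensional gain $q^{2-d}\le1$.
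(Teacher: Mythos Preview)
Your argument is correct and is precisely the standard one: expand the $L^4$ norm of $(fd\sigma)^\vee$, apply Plancherel/orthogonality to rewrite it as $\frac{q^d}{|V|^4}\sum_\xi\big|\sum_{x+y=\xi}f(x)f(y)\big|^2$, then Cauchy--Schwarz with the additive energy bound, treating $\xi=0$ separately. The paper does not supply its own proof of this lemma but defers to Lemma~5.1 in \cite{MT04} and Lemma~13 in \cite{IK08}, where exactly this computation is carried out; your normalization bookkeeping and handling of the diagonal term are both right (one cosmetic slip: your first displayed norm should be over $\mathbb F_q^d$, not $\mathbb F_q^3$).
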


Using  Lemma \ref{L2L4formula}, the following lemma shall give the complete proof of Theorem \ref{mainextensionthm}.

\begin{lemma} Let $P(x)\in \mathbb F_q[x_1,x_2,x_3]$ be a homogeneous polynomial.
Suppose that the homogeneous variety $H=\{x\in \mathbb F_q^3: P(x)=0\}$ does not contain any plane passing through the origin. Then, we have that for every $\xi \in \mathbb F_q^3\setminus \{(0,0,0)\},$
$$|\{(x,y)\in H\times H: x+y=\xi\}|\lesssim q.$$
\end{lemma}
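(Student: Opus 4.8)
The plan is to reduce the count of pairs $(x,y)\in H\times H$ with $x+y=\xi$ to a problem about the intersection of a homogeneous variety with hyperplanes through the origin, so that Lemma~\ref{intersection} (via the Schwartz--Zippel lemma) can be applied. The first step is to fix $\xi\neq(0,0,0)$ and rewrite the constraint. Since $x+y=\xi$ forces $y=\xi-x$, the number we must bound is
$$|\{x\in H : \xi - x \in H\}| = |\{x\in \mathbb F_q^3 : P(x)=0 \text{ and } P(\xi-x)=0\}|.$$
So I want to show this set has size $\lesssim q$.

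The key observation is a homogeneity/parity trick. Because $P$ is homogeneous of degree $k$, we have $P(\xi-x)$, viewed as a polynomial in $x$ for fixed $\xi$, has the same top-degree part as $(-1)^kP(x)$; more importantly, consider the two polynomials $P(x)$ and $P(\xi-x)$ in $\mathbb F_q[x_1,x_2,x_3]$ and look at their difference or an appropriate combination. I would split into cases according to whether $Q(x) := P(\xi - x)$ is or is not a scalar multiple of $P(x)$ modulo the relevant structure. In the generic case where the variety $\{P(x)=0\}\cap\{P(\xi-x)=0\}$ is a genuine curve (i.e. $P(x)$ and $P(\xi-x)$ have no common factor, or more precisely their common zero set is one-dimensional), Bézout/Schwartz--Zippel type bounds give $\lesssim q$ points directly: the common zero locus of two coprime curves in $\mathbb{A}^2$ after we cut by a plane, or one can argue that the system defines a variety of dimension $\le 1$ in $\mathbb F_q^3$ and apply Schwartz--Zippel to conclude $\lesssim q$ points. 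The dangerous case is when $P(x)$ and $P(\xi - x)$ share a common component of positive dimension, i.e. when some irreducible polynomial $g$ divides both $P(x)$ and $P(\xi-x)$.

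The main obstacle is precisely ruling out (or controlling) that bad case: one needs to show that if $g(x)\mid P(x)$ and $g(x)\mid P(\xi-x)$, then $g$ defines a line through the origin, so that $H\supset\{g=0\}$ would contradict the hypothesis that $H$ contains no plane through the origin — wait, a line is not a plane, so this needs care. Here is where I would exploit homogeneity more carefully: if $g(x)\mid P(x)$ with $P$ homogeneous, we may take $g$ homogeneous; then $g(\xi-x)\mid P(\xi-x)$, and the assumption that $g(x)\mid P(\xi-x)$ too, together with degree considerations, should force $g$ to be linear, say $g(x)=a\cdot x$. Then $g(x)\mid P(x)$ means the plane $\{a\cdot x = 0\}$ through the origin is contained in $H$, which is exactly the excluded configuration. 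So the hypothesis is used precisely to kill the common-linear-factor obstruction, and once no homogeneous common factor of positive degree survives, the intersection $\{P(x)=0=P(\xi-x)\}$ is (at most) one-dimensional and Schwartz--Zippel gives the bound $\lesssim q$.

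An alternative, cleaner route I would also attempt: use Lemma~\ref{formulaFourier} and the Fourier-analytic identity. Writing $\nu(\xi) = |\{(x,y)\in H\times H : x+y=\xi\}| = q^{2d}\, (\widehat{H}*\widehat{H})^{\vee}$-type expression — more concretely, $\sum_\xi \nu(\xi)\chi(-m\cdot\xi)$ relates to $\widehat{H}(m)^2$ up to normalization, so
$$\nu(\xi) = q^{2d}\sum_{m} \widehat{H}(m)^2\,\chi(m\cdot\xi).$$
Separating the $m=0$ term (which contributes $|H|^2/q^{d}\sim q^{d-2}$, i.e.\ $q$ when $d=3$) from the rest and applying the decay bound $|\widehat{H}(m)|\lesssim q^{-2}$ from Corollary~\ref{Maincor} to the remaining $\le q^3$ terms would give a contribution $\lesssim q^3\cdot q^{-4}\cdot q^{3} = q^{2}$ — that is too lossy by a factor of $q$, so the naive Fourier bound does not quite close, and one genuinely needs the geometric argument (or a more careful second-moment computation of $\sum_\xi \nu(\xi)^2$) to get the sharp $\lesssim q$. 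I expect the geometric/Schwartz--Zippel argument above, with the common-factor case handled via homogeneity, to be the one that works, and the extraction of "common factor $\Rightarrow$ linear factor $\Rightarrow$ plane through origin in $H$" to be the technical heart of the proof.
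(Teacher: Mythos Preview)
Your approach is genuinely different from the paper's, and it contains a real gap. The paper argues geometrically: it writes $H=\bigcup_{j=1}^{N} L_j$ as a union of $N\lesssim q$ lines through the origin, observes that $|H\cap(H+\xi)|\le \sum_j |H\cap(L_j+\xi)|$, and then shows $|H\cap(L_j+\xi)|\lesssim 1$ whenever $\xi\notin L_j$ by noting that $L_j+\xi$ lies in a unique plane $\Pi_m$ through the origin and invoking Lemma~\ref{intersection} to bound the number of lines of $H$ inside $\Pi_m$. No factorization of $P$ is ever used.

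The gap in your argument is the claim that a common irreducible factor $g(x)$ of $P(x)$ and $P(\xi-x)$ must be linear. This is false. Take $g(x)=x_1^2+ax_2^2$ with $-a$ a nonsquare in $\mathbb F_q$; then $g$ is irreducible over $\mathbb F_q$, homogeneous of degree $2$, and for $\xi=(0,0,\xi_3)$ with $\xi_3\neq 0$ one has $g(\xi-x)=g(x)$ identically. If $g\mid P$, then $g(x)\mid P(\xi-x)$ as well, yet $g$ is not linear and $\{g=0\}$ is not a plane (over $\mathbb F_q$ it is only the $x_3$-axis), so the hypothesis on $H$ is not violated. What is actually going on is that the relation $g(\xi-x)=cg(x)$ forces $\sum_j \xi_j\,\partial_j g\equiv 0$, so after a linear change of coordinates $g$ depends on only two variables; irreducibility over $\mathbb F_q$ then forces $\{g=0\}_{\mathbb F_q}$ to be a single line, contributing only $q$ points. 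So your strategy can be salvaged, but it requires this extra analysis (and a clean Bezout/resultant bound for the coprime part), not the ``linear factor $\Rightarrow$ plane in $H$'' shortcut you proposed. The paper's line-decomposition argument sidesteps all of this and uses only Lemma~\ref{intersection}.
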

\begin{proof}
The first observation is that since $H$ is a homogeneous variety, $H$ is exactly the union of lines passing through the origin.
To see this, just note that if $P(x)=0$ for some $x\neq (0,0,0),$ then $P(tx)=0$ for all $t\in \mathbb F_q.$
Therefore, we can write
\begin{equation}\label{Obser1} H=\cup_{j=1}^N L_j,\end{equation}
where $N$ is a fixed positive integer, $L_j$ denotes a line passing through the origin, and $L_i \cap L_j=\{(0,0,0)\}$ for $i\neq j.$
From the Schwartz-Zippel lemma, it is clear that $|H|\lesssim q^2.$ Thus, the number of lines, denoted by $N$, is $\lesssim q,$ because each line contains $q$ elements. The second important observation is that if $H$ does not contain any plane passing through the origin, then for every $m\in \mathbb F_q^3\setminus \{(0,0,0)\},$
\begin{equation}\label{Obser2} |H\cap \Pi_m|\lesssim q,\end{equation}
where $\Pi_m=\{x\in \mathbb F_q^3: m\cdot x=0\}.$ This observation follows from Lemma \ref{intersection}.
We are ready to prove our lemma. For each $\xi\neq (0,0,0),$ it suffices to prove that
the number of common solutions of $P(x)=0$ and $P(\xi-x)=0$ is $\lesssim q.$ Since $P(x)$ is a homogeneous polynomial, we see that
$P(\xi-x)=0$ if and only if $P(x-\xi)=0.$ Therefore, we aim to show that for every $\xi\neq (0,0,0),$
$$ |H \cap (H+\xi)|\lesssim q,$$
where $H+\xi=\{(x+\xi)\in \mathbb F_q^3: x\in H\}.$ Now, fix $\xi\neq (0,0,0).$ From (\ref{Obser1}), we see that
$$ |H \cap (H+\xi)|\leq \sum_{j=1}^N |H\cap (L_j+\xi)|.$$
Notice that if $\xi\in L_j,$ then $L_j+\xi=L_j$ and so $|H\cap (L_j+\xi)|=q.$
However, there is at most one line $L_j$ such that $\xi\in L_j.$ Thus, it is enough to show that if $\xi\notin L_j$, then $|H\cap (L_j+\xi)|\lesssim 1$, because $N\lesssim q.$ However, this will be clear from (\ref{Obser2}). To see this, first notice that  if $(0,0,0)\neq \xi \notin L_j$, then the line $L_j+\xi$ does not pass through the origin, because the line $L_j$ passes through the origin. Thus, the line $L_j+\xi$ is different from all lines $L_k$ in $H=\cup_{k=1}^N L_k,$ and so there is at most one intersection point of the line $L_j+\xi$ and each line in $H.$  Next, consider the unique plane $\Pi_m$ which contains the line $L_j+\xi.$ Then, (\ref{Obser2}) implies that  at most few lines in $H$ lie in the plane $\Pi_m$ containing the line $L_j+\xi.$ Thus, we conclude that $|H\cap (L_j+\xi)|\lesssim 1 $ for $\xi \notin L_j.$ Thus, the proof is complete.
\end{proof}

\subsection{Remark on the Fourier decay on homogeneous varieties}
In \cite{MT04}, Mockenhaupt and Tao showed that if $d\sigma$ is the normalized surface measure on the paraboloid $V=\{x\in \mathbb F_q^d: x_d=x_1^2+\dots+x_{d-1}^2\}, d\geq 2,$ then the sharp Fourier decay of $d\sigma$ is given by
\begin{equation} \label{verygood}|(d\sigma)^\vee(m)|\lesssim q^{-\frac{d-1}{2}} \quad \mbox{for} ~~ m\in \mathbb F_q^d\setminus \{(0,\dots,0)\}.\end{equation}
From this good Fourier decay and  the Tomas-Stein type argument for finite fields, they observed that
$$ R^*\left(2\to \frac{2d+2}{d-1}\right)\lesssim 1,$$
where the exponents $p=2, r= (2d+2)/(d-1)$ are called the standard Tomas-Stein exponents.
In particular, If $d=3$, then $R^*(2\to 4)\lesssim 1$ which is exactly same as the conclusion of Theorem \ref{mainextensionthm}.
Since (\ref{inversiondecay}) in Remark \ref{inversionremark} says that the surface measure on our homogeneous variety in three dimension yields
a good Fourier decay as in (\ref{verygood}), it is not surprising that the Tomas-Stein type argument gives the complete proof of Theorem \ref{mainextensionthm}. However, it has been believed that the surface measure on homogeneous varieties in general dimensions yields the worse Fourier decay than the decay as in (\ref{verygood}). In fact, if $H=\{x\in \mathbb F_q^d : x_1^2+\dots+x_d^2=0\}$ and $d\geq 2$ is even, then using the explicit Gauss sum estimates one can show that if $m_1^2+\cdots+m_d^2=0,$ then
$$ |(d\sigma)^\vee(m)|\sim q^{-\frac{d-2}{2}}.$$
On the other hand, if $d\geq 3$ is odd, then it has been proved in \cite{Ko09} that the Fourier decay of the surface measure on $H$ is as good as the decay in (\ref{verygood}). From these facts and the good Fourier decay on our homogeneous varieties in $\mathbb F_q^3$, it seems that most homogeneous varieties in odd dimensions yield the good Fourier decay given in $(\ref{verygood}).$ In the last section, we shall give a question concerning this issue.

\subsection{The proof of Theorem \ref{mainextensionthm} by the Fourier decay on homogeneous varieties}
In the previous subsection, we have seen that the Tomas-Stein type argument for finite fields will yield the alternative proof of Theorem \ref{mainextensionthm}. For the sake of completeness, we give the complete proof. Let $R^*: L^p(H,d\sigma) \to L^r({\mathbb F}_q^3, dm)$ be the extension map  $f\to (fd\sigma)^\vee,$ and $R: L^{r^\prime}({\mathbb F}_q^3, dm)\to L^{p^\prime}(H,d\sigma)$ be its dual, the restriction map
$g\to \widehat{g}|_H.$ Observe that $ R^*Rg = (\widehat{g}d\sigma)^\vee = g \ast (d\sigma)^\vee$ for every function $g$ on $(\mathbb F_q^3,dm).$
Now, in order to prove Theorem \ref{mainextensionthm} we must show that for every function $f$ on $(H,d\sigma)$,
$$\|(fd\sigma)^\vee\|_{L^4(\mathbb F_q^3, dm)} \lesssim \|f\|_{L^2(H, d\sigma)},$$
where $d\sigma$ is the normalized surface measure on the homogeneous variety $H \subset \mathbb F_q^3.$
By duality (\ref{restrictiondef}), it is enough to show that the following restriction estimate holds:
for every function $g$ defined on $({\mathbb F_q^3},dm),$ we have
$$ \|\widehat{g}\|^2_{L^2(H,d\sigma)}\lesssim \|g\|^2_{L^{\frac{4}{3}}(\mathbb F_q^3, dm)}.$$
By the orthogonality principle, we see that
\begin{align*}\|\widehat{g}\|^2_{L^2(H,d\sigma)} =&<Rg,~Rg>_{L^2(H,d\sigma)} =<R^*Rg,~g>_{L^2({\mathbb F}_q^3, dm)}\\
=&<g \ast (d\sigma)^\vee ,~g>_{L^2({\mathbb F}_q^3, dm)}~\leq \|g \ast (d\sigma)^\vee \|_{L^4(\mathbb F_q^3, dm)}~\|g\|_{L^{\frac{4}{3}}(\mathbb F_q^3, dm)},\end{align*}
where the inequality follows by H\"older's inequality. It therefore suffices to show that for every function $g$ on $({\mathbb F_q^3},dm),$
$$\|g\ast (d\sigma)^\vee \|_{L^{4}(\mathbb F_q^3, dm)}\lesssim \|g\|_{L^{\frac{4}{3}}({\mathbb F_q^3}, dm)}.$$
For each $m\in (\mathbb F_q^3, dm),$ define $ K(m)=(d\sigma)^\vee(m) -\delta_0(m)$ where $\delta_0(m)=0$ for $m\neq (0,0,0)$ and $\delta_0(0,0,0)=1.$
Since $(d\sigma)^\vee(0,0,0)=1$, we see that $ K(m)=0$ if $m=(0,0,0)$ and $K(m)=(d\sigma)^\vee(m)$ if $m\neq (0,0,0).$ It follows that
$$ \|g\ast (d\sigma)^\vee \|_{L^{4}(\mathbb F_q^3, dm)}=\|g\ast (K+\delta_0)\|_{L^{4}(\mathbb F_q^3, dm)}\leq \|g\ast K \|_{L^{4}(\mathbb F_q^3, dm)}
+ \|g\ast \delta_0 \|_{L^{4}(\mathbb F_q^3, dm)}.$$
Since $g\ast \delta_0= g$ and $dm$ is the counting measure,  we see that
$$\begin{array}{ll} \|g\ast \delta_0 \|_{L^4(\mathbb F_q^3, dm)}&=\|g \|_{L^4({\mathbb F_q^3}, dm)}\\
&\leq \|g\|_{L^{\frac{4}{3}}(\mathbb F_q^3, dm)}\end{array}.$$
Thus, it is enough to show that for every $g$ on $({\mathbb F_q^3},dm),$
$$\|g\ast K \|_{L^4({\mathbb F_q^3}, dm)}\lesssim \|g\|_{L^{\frac{4}{3}}({\mathbb F_q^3}, dm)}.$$
However, this estimate follows immediately by interpolating the following two inequalities:
\begin{equation}\label{ltwo} \|g\ast K\|_{L^2({\mathbb F_q^3}, dm)}\lesssim q \|g\|_{L^2({\mathbb F_q^3},dm)}
\end{equation}
and
\begin{equation}\label{linfty}\|g\ast K\|_{L^\infty({\mathbb F_q^3}, dm)}\lesssim q^{-1} \|g\|_{L^1({\mathbb F_q^3},dm)}.
\end{equation}
Thus, it remains to show that both (\ref{ltwo}) and (\ref{linfty}) hold. Using the Plancherel theorem, the inequality (\ref{ltwo}) follows from the following observation:
$$ \begin{array}{ll} \|g\ast K\|_{L^2(\mathbb F_q^3, dm)}&=\|\widehat{g}\widehat{K}\|_{L^2(\mathbb F_q^3, dx)}\\
&\leq \|\widehat{K}\|_{L^\infty({\mathbb F_q^3}, dx)} \|\widehat{g}\|_{L^2({\mathbb F_q^3}, dx)}\\
&\lesssim q\|g\|_{L^2({\mathbb F_q^3}, dm)},\end{array}$$
where the last line is based on the observation that for each $x\in ({\mathbb F_q^3},dx)$\\
$\widehat{K}(x)= d\sigma(x)-\widehat{\delta_0}(x)=q^3|H|^{-1} H(x)-1 \lesssim q,$ because $|H|\sim q^2$ and $\delta_0$ is a function on $(\mathbb F_q^3, dm) $ with a counting measure $dm.$ Finally, the estimate (\ref{linfty}) follows from Young's inequality and  the Fourier decay estimate (\ref{inversiondecay}) in Remark \ref{inversionremark}. Thus, the proof is complete.

\section{Averaging problems for finite fields}

In the finite field setting, Carbery, Stones and Wright \cite{CSW08} recently addressed the averaging problems over algebraic varieties related to vector-valued polynomials. Recall that $(\mathbb F_q^d, dx), d\geq 2,$ is the $d$-dimensional vector space with the normalized counting measure $dx.$
For $1\leq k\leq d-1$, they considered a specific vector-valued polynomial $P_k: \mathbb F_q^k\to \mathbb F_q^d$ given by
$$ P_k(x)=\left(x_1,x_2,\dots,x_k, x_1^2+x_2^2+\dots+x_k^2, x_1^3+\cdots+x_k^3, \dots, x_1^{d-k+1}+\cdots +x_k^{d-k+1} \right)$$
and studied the averaging problem over the $k$-dimensional surface $V_k=\{ P_k(x)\in \mathbb F_q^d: x\in \mathbb F_q^k\}.$ Using the Weil's theorem \cite{We48} for exponential sums, they obtained the sharp, good Fourier decay on the surface $V_k,$ which led to the complete solution for the averaging problem. It will be also interesting to study the averaging problem over some algebraic varieties which can not be explicitly defined by a vector-valued polynomial. Koh \cite{Ko09} studied the averaging problem over the variety $V=\{x\in \mathbb F_q^d: a_1x_1^2+a_2x_2^2+\cdots+a_dx_d^2=0\}$ for all $a_j\neq 0.$ Using the explicit Gauss sum estimates, he observed that if the dimension $d$ is odd, then
the sharp Fourier decay on the variety $V$ is given by $|\widehat{V}(m)|\lesssim q^{-(d+1)/2}$ for all $m\neq (0,\dots,0).$ In addition, he showed that
if the dimension $d\geq 3$ is odd, then the complete solution for the averaging problem over the variety $V$  can be obtained by simply applying the good Fourier decay on $V.$ However, when the dimension $d\geq 2$ is even, it was also observed by Koh that the sharp Fourier decay on $V$ takes the following worse form: $ |\widehat{V}(m)|\lesssim q^{-d/2}$ for every $m\neq (0,\dots,0)$  and so the averaging problem becomes much harder. From Koh's observations, one may guess that most homogeneous varieties in odd dimensions yield the good Fourier decay but the homogeneous varieties in even dimensions do not. Authors in this paper do not know the exact answer for this issue. However, our Corollary \ref{Maincor} gives the positive answer for the homogeneous varieties in three dimensions. In this section, we shall show that Corollary \ref{Maincor} yields the complete solution for the averaging problems over the homogeneous varieties in three dimension.
\subsection{Definition of the averaging problem for finite fields}
We review the averaging problem over algebraic varieties in the finite field setting.
Let $V$ be an algebraic variety in $(\mathbb F_q^d, dx), d\geq 2,$ where $dx$ also denotes the normalized counting measure.
We also denotes by $d\sigma$ the normalized surface measure on $V.$
For $1\leq p, r\leq \infty,$ define $A(p\to r)$ as the smallest constant such that for every $f$ defined on $({\mathbb F_{q}^d},dx)$, we have
$$ \|f\ast d\sigma\|_{L^r({\mathbb F_{q}^d, dx})} \leq A(p\to r) \|f\|_{L^p({\mathbb F_{q}^d, dx})},$$
where we recall that $f\ast d\sigma(x)= \int_{V} f(x-y) d\sigma(y)= \frac{1}{|V|} \sum_{y\in V} f(x-y).$ Then, the averaging problem is to determine the exponents $1\leq p,r\leq \infty$ such that $ A(p\to r)\leq C $ for some constant $C>0$ independent of $q,$ the size of the underlying finite field $\mathbb F_q.$

\subsection{ Sharp boundedness of the averaging operator on homogeneous varieties in $\mathbb F_q^3$}
Now, let us consider the homogeneous variety $H$ in three dimension given by
\begin{equation}\label{Defh} H=\{x\in \mathbb F_q^3: P(x)=0\}, \end{equation}
where $P(x)\in \mathbb F_q[x_1,x_2,x_3]$ is a homogeneous polynomial.
The following theorem is our main theorem whose proof is based on applying  well-known harmonic analysis methods for the Euclidean case.
We shall prove our main theorem by adopting the arguments in \cite{CSW08}.
\begin{theorem}\label{mainaveraging} Let $H \subset \mathbb F_q^3$ be the homogeneous variety given as in (\ref{Defh}).
Assume that $|H|\sim q^2$ and $H$ does not contain any plane passing through the origin. Then, we have that
$ A(p\to r)\lesssim 1 $ if and only if  $(1/p, 1/r)$ is contained in the convex hull of the points $(0,0), (0,1),(1,1), (3/4, 1/4).$
\end{theorem}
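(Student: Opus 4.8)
The plan is to establish the theorem by proving that $A(p\to r)\lesssim 1$ exactly on the vertices of the claimed convex hull, and then interpolating. The necessity direction is standard: testing the averaging inequality against $f=\delta_0$ (the indicator of a point) and against $f=H$ (the indicator of the variety itself), together with the nesting properties of $L^p$-norms on the finite space $({\mathbb F}_q^d,dx)$, forces $(1/p,1/r)$ to lie in the convex hull of $(0,0)$, $(0,1)$, $(1,1)$, and one nontrivial corner; the point of the theorem is that this nontrivial corner is $(3/4,1/4)$. So the real content is the sufficiency: we must show $A(4\to 4)\lesssim 1$ at the critical point $(3/4,1/4)$, since the trivial estimates $A(1\to 1)\lesssim 1$, $A(1\to\infty)\lesssim 1$, and $A(\infty\to\infty)\lesssim 1$ handle the other three vertices, and Riesz--Thoren interpolation fills in the hull.

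First I would write $f\ast d\sigma$ on the Fourier side. Using $(f\ast d\sigma)(x)=\sum_{m}\chi(x\cdot m)\,\widehat{f}(m)\,(d\sigma)^\vee(m)$ — more precisely, following the same device as in the extension proof, split $(d\sigma)^\vee(m)=\delta_0(m)+K(m)$ where $K$ is supported away from the origin and satisfies $|K(m)|\lesssim q^{-1}$ by Remark \ref{inversionremark} (valid since $|H|\sim q^2$ and $H$ contains no plane through the origin). The $\delta_0$ contribution is just convolution by a constant, which maps $L^p\to L^r$ trivially for all $p\ge r$ and in particular contributes an $O(1)$ term to $A(4\to 4)$. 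So it suffices to bound the operator $T_K g = g\ast K$ from $L^4$ to $L^4$. For this I would interpolate the two endpoint bounds
\begin{equation*}
\|T_K g\|_{L^2({\mathbb F}_q^3,dx)}\lesssim q^{-1}\|g\|_{L^2({\mathbb F}_q^3,dx)}
\quad\text{and}\quad
\|T_K g\|_{L^\infty({\mathbb F}_q^3,dx)}\lesssim \|g\|_{L^{1}({\mathbb F}_q^3,dx)}.
\end{equation*}
The first follows from Plancherel together with $\|\widehat{K}\|_\infty=\||H|^{-1}q^3\,H - 1\|_{?}$ — wait, more carefully: $\widehat{K}(x)$ equals $d\sigma(x)$ minus a constant, which is $\lesssim 1$ pointwise after accounting for the normalization, giving the factor we need; I would chase the exact power of $q$ through the normalized-measure conventions of Section 2. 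The second (the $L^1\to L^\infty$ bound) is exactly Young's inequality with $\|K\|_\infty\lesssim q^{-1}$, but one must be careful because $dx$ is the \emph{normalized} counting measure, so Young's inequality on $({\mathbb F}_q^3,dx)$ reads $\|g\ast K\|_\infty\le \|g\|_{L^1(dx)}\|K\|_{L^\infty(dx)}$ and the powers of $q$ work out. Interpolating these with Riesz--Thorten at $\theta=2/3$ gives $\|T_K g\|_{L^3}\lesssim q^{-?}\|g\|_{L^{3/2}}$-type estimates; I would instead directly interpolate to land at $p=r=4$, which lies on the correct segment, obtaining $\|T_K g\|_{L^4}\lesssim \|g\|_{L^{4/3}}$ — and then note that on the finite space, $L^{4/3}\supset L^4$ with the nesting inequality $\|g\|_{L^{4/3}(dx)}\ge\|g\|_{L^4(dx)}$ going the wrong way, so I must instead arrange the interpolation endpoints so that the output exponent pairs up against an $L^4$ input. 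Concretely, interpolating $L^2\to L^2$ (with gain $q^{-1}$) against $L^4\to L^\infty$ (obtained from $L^1\to L^\infty$ by restricting, or directly: $\|T_Kg\|_\infty\le\|K\|_\infty\|g\|_1$ then Hölder $\|g\|_1\le\|g\|_4$) at the appropriate $\theta$ yields $L^4\to L^4$ boundedness with a constant that is $O(1)$ — the negative powers of $q$ from the $L^2$ endpoint are exactly what compensate the positive powers one loses passing from $L^1$ to $L^4$.

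The main obstacle, and the step deserving the most care, is bookkeeping the powers of $q$ through the two measure normalizations ($dx$ normalized, $dm$ counting) when passing between the kernel estimate $|(d\sigma)^\vee(m)|\lesssim q^{-1}$ and the operator norm of convolution on $({\mathbb F}_q^3,dx)$. In particular one must verify that the critical exponent falls exactly at $(3/4,1/4)$ and not at a nearby point — i.e. that the $L^2\to L^2$ gain is precisely $q^{-1}$ (no better, no worse, which is where the sharpness and hence the \emph{only if} direction gets its matching lower bound via the Knapp-type example supported on a single line $L_j\subset H$) and that the trivial $L^4\to L^\infty$ loss is exactly the reciprocal power. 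Once these two are pinned down, Riesz--Thorin at the balancing $\theta$ gives $A(4\to 4)\lesssim 1$, the three trivial corners give the rest of the vertices, interpolation gives the convex hull, and the necessity computations (point mass and full variety test functions) show the hull cannot be enlarged, completing the proof.
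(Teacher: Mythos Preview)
Your overall architecture matches the paper's proof almost exactly: decompose $(d\sigma)^\vee=\delta_0+K$, handle the constant piece trivially, and interpolate an $L^2\to L^2$ bound with gain against an $L^1\to L^\infty$ bound with loss to hit the critical corner. But you have misread the target exponents, and this sends the second half of your writeup chasing a phantom.

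The point $(1/p,1/r)=(3/4,1/4)$ means $p=4/3$ and $r=4$, so the critical estimate is $A(4/3\to 4)\lesssim 1$, \emph{not} $A(4\to4)$. When your interpolation produces $\|T_Kg\|_{L^4}\lesssim\|g\|_{L^{4/3}}$, you are done with the nontrivial corner; there is nothing ``going the wrong way'' and no need to rearrange endpoints to land at $L^4\to L^4$ (which in any case lies on the diagonal and is already covered by the trivial Young estimate $A(p\to p)\lesssim 1$). Everything after ``and then note that on the finite space\ldots'' should simply be deleted.

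Second, be careful which object is the kernel and which is the multiplier. On $(\mathbb F_q^3,dx)$ the averaging operator is convolution by $\widehat{K}$ (primal side), not by $K$ (dual side). The multiplier $K$ satisfies $\|K\|_\infty\lesssim q^{-1}$, which via Plancherel gives the $L^2\to L^2$ gain $q^{-1}$; the kernel $\widehat{K}(x)=q^3|H|^{-1}H(x)-1$ satisfies $\|\widehat{K}\|_{L^\infty(dx)}\lesssim q$, so Young gives $\|f\ast\widehat{K}\|_{L^\infty}\lesssim q\,\|f\|_{L^1}$, with a factor of $q$, not $O(1)$ as you wrote. Interpolating $q^{-1}$ at $L^2\to L^2$ against $q$ at $L^1\to L^\infty$ with $\theta=1/2$ gives $L^{4/3}\to L^4$ with constant $q^{-1/2}\cdot q^{1/2}=1$, exactly as needed.

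Finally, for necessity the paper does not use a Knapp line example; it tests against $f=\delta_0$ to obtain $3/p\le 1/r+2$ and then invokes duality (the adjoint of the averaging operator is again averaging, up to reflection) to obtain $3/r'\le 1/p'+2$. These two half-planes intersect in precisely the stated quadrilateral. Your proposed line test would give a weaker constraint than the $\delta_0$ test here.
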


\begin{remark} In the Euclidean case, it is well known that if $1\leq r < p \leq \infty,$ then $L^p-L^r$ estimate is impossible.
However, in the finite field setting, we shall see that it is always true that $R^*(p\to r)\lesssim 1$ for $1\leq r < p \leq \infty.$
Like the Euclidean case,  the main interest for finite fields will be also the case when $1\leq p \leq r \leq \infty.$
\end{remark}
\begin{proof} We prove Theorem \ref{mainaveraging}.\\
\textbf{$(\Longrightarrow )$} Suppose that $A(p\to r) \lesssim 1 $ for $1\leq p,r\leq \infty.$ Then, it must be true that
for every function $f$ on $({\mathbb F_q^3},dx),$
$$\|f\ast d\sigma\|_{L^r({\mathbb F_q^3, dx})} \lesssim \|f\|_{L^p({\mathbb F_q^3, dx})}.$$
In particular, this inequality  also holds when we take $f=\delta_0,$ where $\delta_0(x)=0$ if $x\neq (0,0,0)$ and $\delta_0(0,0,0)=1.$
Thus, we see that
\begin{equation}\label{proofN}
\|\delta_0\ast d\sigma\|_{L^r({\mathbb F_q^3, dx})} \lesssim \|\delta_0\|_{L^p({\mathbb F_q^3, dx})}.\end{equation}
Since $dx$ is the normalized counting measure, the right hand side is given by
\begin{equation}\label{test1} \|\delta_0\|_{L^p(\mathbb F_q^3, dx)}=q^{-\frac{3}{p}}.\end{equation}
To estimate the left hand side, we recall from (\ref{alex}) that $d\sigma(x)=q^3 |H|^{-1} H(x)~dx$ and notice that
$$(\delta_0\ast d\sigma)(x)= \frac{q^3}{|H|} (\delta_0\ast H)(x)=\frac{1}{|H|}\delta_{H}(x),$$
where $\delta_H(x)=1$ if $x\in H$, and $\delta_H(x)=0$ if $x\notin H.$  Thus, the left hand side in (\ref{proofN}) is given by
\begin{equation}\label{test2}
\|\delta_0\ast d\sigma\|_{L^r(\mathbb F_q^3, dx)} =q^{-\frac{3}{r}}|H|^{\frac{1-r}{r}}\sim q^{\frac{-2r-1}{r}},\end{equation}
where we also used the hypothesis that $|H|\sim q^2.$ Thus, from (\ref{proofN}), (\ref{test1}), and (\ref{test2}), it must be true that  \begin{equation}\label{kohline1}\frac{3}{p}\leq \frac{1}{r}+2.\end{equation}
By duality we also see that it must be true that
$$ \frac{3}{r'}\leq \frac{1}{p'}+2.$$
From this and (\ref{kohline1}), a simple calculation shows that $(1/p, 1/r)$ must be contained in the convex hull of  the points $(0,0), (0,1),(1,1),
(3/4, 1/4).$\\

\textbf{$(\Longleftarrow )$} We must show that $ A(p\to r)\lesssim 1 $ for all $1\leq p,r\leq \infty$ such that $(1/p,1/r)$ lies in the convex hull of
the points $(0,0), (0,1),(1,1), (3/4, 1/4).$ To do this, first we shall prove that for every function $f$ on $(\mathbb F_q^3, dx),$
 \begin{equation}\label{trivialone} \|f\ast d\sigma\|_{L^r({\mathbb F_{q}^3, dx})}\lesssim  \|f\|_{L^p({\mathbb F_{q}^3, dx})} \quad \mbox{if}~~  1\leq r \leq p\leq \infty .\end{equation}
Next, we shall prove that for every function $f$ on $(\mathbb F_q^3, dx),$
\begin{equation}\label{hardone}\|f\ast d\sigma\|_{L^4({\mathbb F_{q}^3, dx})}\lesssim \|f\|_{L^{\frac{4}{3}}({\mathbb F_{q}^3, dx})}.\end{equation}
Finally, interpolating (\ref{trivialone}) and (\ref{hardone}) shall give the complete proof.
Now, let us prove that $(\ref{trivialone})$ holds. Since $d\sigma$ is the normalized surface measure and $dx$ is the normalized counting measure, we see that both $d\sigma$ and $({\mathbb F_{q}^3},dx)$ have total mass $1$. It therefore follows from Young's inequality and H\"{o}lder's inequality that if $1\leq r \leq p\leq \infty$, then
\begin{equation} \|f\ast d\sigma\|_{L^r({\mathbb F_{q}^3, dx})}\leq \|f\|_{L^r({\mathbb F_{q}^3, dx})} \leq \|f\|_{L^p({\mathbb F_{q}^3, dx})}.\end{equation}
To complete the proof, it therefore suffices to show that the inequality (\ref{hardone}) holds.
As before, we consider  a function $K$ on $({\mathbb F_q^3}, dm)$ defined as $K=(d\sigma)^\vee -\delta_0.$
Note that for each $x\in (\mathbb F_q^3,dx)$, we have $\widehat{\delta_0}(x)=\int_{\mathbb F_q^3} \chi(-x\cdot m) \delta_0(m) dm=1,$
because $dm$ is the counting measure. Since $d\sigma= \widehat{K}+\widehat{\delta_0}= \widehat{K}+1$ and $\|f\ast 1\|_{L^{4}({\mathbb F_q^3},dx)}\lesssim \|f\|_{L^{\frac{4}{3}}({\mathbb F_q^3},dx)} \|1\|_{L^2({\mathbb F_q^3},dx)}=\|f\|_{L^{\frac{4}{3}}({\mathbb F_q^3},dx)}$ by Young's inequality, it is enough to show that for every $f$ on $({\mathbb F_q^3},dx),$ we have
$$\|f\ast \widehat{K}\|_{L^{4}({\mathbb F_q^3},dx)}\lesssim \|f\|_{L^{\frac{4}{3}}({\mathbb F_q^3},dx)}.$$
However, this inequality can be obtained by  interpolating the following two estimates:
\begin{equation}\label{first1}
\|f\ast \widehat{K}\|_{L^2({\mathbb F_q^3},dx)}\lesssim  q^{-1}\|f\|_{L^2({\mathbb F_q^3},dx)}
\end{equation}
and
\begin{equation}\label{second2}
\|f\ast \widehat{K}\|_{L^{\infty}({\mathbb F_q^3},dx)}\lesssim  q \|f\|_{L^1({\mathbb F_q^3},dx)}.
\end{equation}
Thus, it remains to prove that both (\ref{first1}) and (\ref{second2}) hold. From the definition of $K$ and (\ref{inversiondecay}) in Remark \ref{inversionremark}, it is clear that
$$ \|K\|_\infty \lesssim q^{-1}.$$
Thus, using this fact,  the inequality (\ref{first1}) follows from the Plancherel theorem.
On the other hand, the inequality (\ref{second2}) follows from Young's inequality and the observation that $\|\widehat{K}\|_{L^\infty({\mathbb F_q^3},dx)}\lesssim q.$ Thus, we complete the proof.
\end{proof}

\section{Erd\H os-Falconer distance problem for finite fields}
 Let $E, F\subset \mathbb F_q^d, d\geq 2.$ Given a polynomial $P(x)\in \mathbb F_q[x_1,\dots,x_d]$,  the generalized distance set $\Delta_P(E,F)$ can be defined by
 $$ \Delta_P(E,F) = \{P(x-y)\in \mathbb F_q : x\in E, y\in F\},$$
 throughout  this paper, we always assume the characteristic of $\mathbb F_q$ is sufficiently large (in the sense that comparing the degree of $P$).
In the finite field case,  the generalized Erd\H os distance problem is to determine the minimum cardinality of $\Delta_P(E,F)$ in terms of $|E|$ and $|F|.$ In the case when $E=F$ and $P(x)=x_1^2+x_2^2$, this problem was first introduced by Bourgain, Katz, and Tao \cite{BKT04}.
Using the discrete Fourier analytic machinery, Iosevich and Rudnev \cite{IR07} formulated this problem and obtained several interesting results.
For example, they proved the following:
\begin{theorem}\label{Iosevich} If $E\subset \mathbb F_q^d, d\geq 2,$ with $|E|\geq C q^{\frac{d}{2}}$ for $C>0$ sufficiently large, then we have
$$ |\Delta_P(E,E)|\gtrsim \min\left( q, |E|q^{-\frac{d-1}{2}}\right),$$
where $P(x)=x_1^2+\cdots+x_d^2.$
\end{theorem}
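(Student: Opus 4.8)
The plan is to run the Fourier-analytic counting argument of Iosevich and Rudnev. For $t\in\mathbb F_q$ write $S_t=\{z\in\mathbb F_q^d: P(z)=t\}$ (identified with its characteristic function) and set
\[
\nu(t)=|\{(x,y)\in E\times E: P(x-y)=t\}|=\sum_{x,y\in\mathbb F_q^d}E(x)\,E(y)\,S_t(x-y).
\]
Since $\nu(t)>0$ precisely when $t\in\Delta_P(E,E)$ and $\sum_{t\in\mathbb F_q}\nu(t)=|E|^2$, it suffices to prove two estimates: a uniform upper bound $\nu(t)\lesssim M$ for every $t\neq 0$, where $M:=|E|^2 q^{-1}+q^{(d-1)/2}|E|$, and the lower bound $\sum_{t\neq 0}\nu(t)\geq\tfrac12|E|^2$. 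Granting these, a pigeonhole gives
\[
|\Delta_P(E,E)|\geq|\{t\neq0:\nu(t)>0\}|\geq\frac{\sum_{t\neq0}\nu(t)}{\max_{t\neq0}\nu(t)}\gtrsim\frac{|E|^2}{M}=\frac{q|E|}{|E|+q^{(d+1)/2}}\gtrsim\min\!\left(q,\ |E|q^{-\frac{d-1}{2}}\right),
\]
the last step being a case split on whether $|E|\geq q^{(d+1)/2}$ or not.

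First I would expand the factor $S_t(x-y)$ by the Fourier inversion formula $(\ref{Finversion})$ and peel off the frequency-zero term, which yields
\[
\nu(t)=\frac{|S_t|\,|E|^2}{q^d}+R(t),\qquad R(t)=q^{2d}\sum_{m\neq 0}\widehat{S_t}(m)\,|\widehat E(m)|^2.
\]
By the Plancherel identity $(\ref{Plancherel})$ one has $|R(t)|\leq q^{d}|E|\,\max_{m\neq 0}|\widehat{S_t}(m)|$, so everything is reduced to bounding the Fourier transform of the ``spheres'' $S_t$.

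The technical core is the estimate of $\widehat{S_t}(m)$ for $m\neq 0$. Detecting the condition $P(z)=t$ with the additive characters of $\mathbb F_q$ and completing the square in each coordinate rewrites $\sum_{z\in S_t}\chi(-m\cdot z)$ as $q^{-1}\mathcal G^{\,d}\sum_{s\neq 0}\chi\!\big(-st-P(m)/(4s)\big)\,\eta(s)^d$, where $\eta$ is the quadratic character of $\mathbb F_q^\ast$ and $\mathcal G$ is the quadratic Gauss sum with $|\mathcal G|=q^{1/2}$. For $t\neq 0$ the remaining sum in $s$ is a Kloosterman sum when $d$ is even and a Sali\'{e} sum when $d$ is odd, so in either case it is $O(q^{1/2})$ by Weil's bound (respectively by the explicit formula for Sali\'{e} sums); this gives $|\widehat{S_t}(m)|\lesssim q^{-(d+1)/2}$ for $t\neq0$ and $m\neq0$, along with $|S_t|=q^{d-1}+O(q^{(d-1)/2})$. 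For $t=0$ the $s$-sum degenerates to an elementary character sum (for $d$ even) or a Gauss sum (for $d$ odd), and one only gets the weaker $|\widehat{S_0}(m)|\lesssim q^{-d/2}$ in the even case.

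Finally I would assemble the pieces. For $t\neq0$ the decomposition above together with $|S_t|\lesssim q^{d-1}$ and $|\widehat{S_t}(m)|\lesssim q^{-(d+1)/2}$ gives $\nu(t)\lesssim |E|^2 q^{-1}+q^{(d-1)/2}|E|=M$, while for $t=0$ the weaker decay gives only $\nu(0)\lesssim |E|^2 q^{-1}+q^{d/2}|E|$; because $|E|\geq Cq^{d/2}$ with $C$ large, this last bound is $\leq\tfrac12|E|^2$, whence $\sum_{t\neq0}\nu(t)=|E|^2-\nu(0)\geq\tfrac12|E|^2$. The pigeonhole of the first paragraph then finishes the proof. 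The main obstacle is the Fourier-decay step: squeezing out the full $q^{-(d+1)/2}$ decay, rather than the trivial $q^{-d/2}$ that the Gauss sums alone provide, requires genuine cancellation in the Kloosterman/Sali\'{e} sums (Weil's bound); and the exceptional sphere $t=0$, whose decay is worse for even $d$, must be handled on its own — this is exactly the place where the cardinality threshold $|E|\gtrsim q^{d/2}$ is spent.
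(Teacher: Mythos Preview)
Your argument is correct and is precisely the Iosevich--Rudnev Fourier-analytic counting that the paper is \emph{citing} here; note that the paper does not give its own proof of this theorem but merely quotes it from \cite{IR07}. The machinery you wrote out---expand $\nu(t)$ via Fourier inversion, separate the zero frequency, bound the tail by Plancherel times $\max_{m\neq0}|\widehat{S_t}(m)|$, and then feed in the Gauss/Kloosterman/Sali\'e estimates---is exactly what the paper later repackages as Lemma~\ref{stronglemma} (there for two sets $E,F$ and under a uniform decay hypothesis on all $H_t$). Your additional step of isolating $t=0$ and using the weaker decay $|\widehat{S_0}(m)|\lesssim q^{-d/2}$ together with the hypothesis $|E|\geq Cq^{d/2}$ to secure $\sum_{t\neq0}\nu(t)\geq\tfrac12|E|^2$ is the standard way the threshold enters in \cite{IR07}, and it is handled correctly.
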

In addition, they addressed the Falconer distance problem for finite fields, which is the problem to determine the size of $E$ such that
$|\Delta_P(E,E)|\gtrsim q.$ Note that Theorem \ref{Iosevich} implies that if $P(x)=x_1^2+\cdots+x_d^2$ and $|E|\gtrsim q^{(d+1)/2}$, then $|\Delta_P(E,E)|\gtrsim q.$ Authors in \cite{HIKR10} observed that if the dimension $d\geq 3$ is odd, then the exponent $(d+1)/2$ gives the best possible result on the Falconer distance problem for finite fields. On the other hand, it has been conjectured that the exponent $d/2$ could be the best possible one  if the dimension $d\geq 2$ is even. In the case when $d=2,$ the sharp exponent $(d+1)/2$ for odd dimensions was improved by $4/3$
(see \cite{CEHIK09} or \cite{KS10}). From these facts, one may think that improving Theorem \ref{Iosevich} for even dimensions is only interesting.
However, we shall focus on the problem in odd dimensions. The main point we want to address is that if the dimension $d\geq 3$ is odd, then the condition in Theorem \ref{Iosevich},  $|E|\geq C q^{\frac{d}{2}}$, can be relaxed. On the other hand, the condition is necessary for even dimensions.
More generally, we consider the following conjecture.
\begin{conjecture}\label{KSconj} Let $P(x)= \sum_{j=1}^d a_j x_j^c \in \mathbb F_q[x_1,\dots,x_d]$ with $a_j\neq 0, c\geq 2.$
If $E, F \subset \mathbb F_q^d$ and $d\geq 3$ is odd, then we have
$$ |\Delta_P(E,F)| \gtrsim \min\left( q, q^{-\frac{d-1}{2}} \sqrt{|E||F|}\right).$$
\end{conjecture}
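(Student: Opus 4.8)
The plan is to carry out the standard $L^2$ reduction of the generalized distance problem to a uniform Fourier decay estimate on the level sets of $P$, and then to supply that estimate from Corollary~\ref{Maincor} on the homogeneous slice $\{P=0\}$ together with Gauss-- and Weil-sum bounds on the remaining slices. For $t\in\mathbb F_q$ write $S_t=\{z\in\mathbb F_q^d:P(z)=t\}$, and for $E,F\subset\mathbb F_q^d$ set $\nu(t)=|\{(x,y)\in E\times F:P(x-y)=t\}|$. Since $\nu$ is supported on $\Delta_P(E,F)$ and $\sum_t\nu(t)=|E||F|$, we have $\sum_t\nu(t)^2\le|E||F|\max_t\nu(t)$, so Cauchy--Schwarz gives
$$|\Delta_P(E,F)|\ \ge\ \frac{\big(\sum_t\nu(t)\big)^2}{\sum_t\nu(t)^2}\ \ge\ \frac{|E||F|}{\max_t\nu(t)}.$$
It therefore suffices to prove $\max_t\nu(t)\lesssim|E||F|\,q^{-1}+q^{(d-1)/2}\sqrt{|E||F|}$, because balancing the two terms then yields $|\Delta_P(E,F)|\gtrsim\min\big(q,\,q^{-(d-1)/2}\sqrt{|E||F|}\big)$; observe that this argument uses no lower bound on $|E|$ or $|F|$, which is exactly the improvement over the hypothesis $|E|\ge Cq^{d/2}$ of Theorem~\ref{Iosevich}.

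Next I would expand $\nu(t)$ on the Fourier side. With $\widetilde F(z)=F(-z)$ and $\lambda=E\ast\widetilde F$ one has $\nu(t)=\sum_z S_t(z)\lambda(z)$, and the Plancherel theorem gives
$$\nu(t)=\frac{|S_t|\,|E|\,|F|}{q^d}\ +\ q^{2d}\sum_{m\neq(0,\dots,0)}\widehat{S_t}(m)\,\widehat E(m)\,\overline{\widehat F(m)}.$$
The Schwartz--Zippel lemma applied to $P(z)-t$ gives $|S_t|\lesssim q^{d-1}$, so the main term is $\lesssim|E||F|\,q^{-1}$. The error term is at most $q^{2d}\big(\max_{m\neq0}|\widehat{S_t}(m)|\big)\sum_{m\neq0}|\widehat E(m)||\widehat F(m)|$, and Cauchy--Schwarz together with $\sum_m|\widehat E(m)|^2=|E|\,q^{-d}$ (and the analogue for $F$) bounds the last sum by $q^{-d}\sqrt{|E||F|}$. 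Hence the whole problem reduces to the single estimate
\begin{equation}\label{keydecay}
\max_{t\in\mathbb F_q}\ \max_{m\in\mathbb F_q^d\setminus\{(0,\dots,0)\}}\ |\widehat{S_t}(m)|\ \lesssim\ q^{-\frac{d+1}{2}}.
\end{equation}

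To establish (\ref{keydecay}) I would treat $t=0$ and $t\neq0$ separately. For $t=0$ the slice $S_0$ is a homogeneous variety; since $P=\sum_j a_jx_j^c$ has all $a_j\neq0$, $c\ge2$, and the characteristic is large (so no $\binom{c}{k}$ with $0<k<c$ vanishes), restricting $P$ to an arbitrary hyperplane through the origin and matching coefficients shows that, for $d\ge3$, $S_0$ contains no such hyperplane; in dimension three Corollary~\ref{Maincor} then gives precisely $|\widehat{S_0}(m)|\lesssim q^{-2}=q^{-(d+1)/2}$ (for higher odd $d$ with $c=2$ this decay is instead the content of \cite{Ko09}). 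For $t\neq0$, the identity $S_t(z)=q^{-1}\sum_s\chi\big(s(P(z)-t)\big)$ and the separation of variables in $P$ give
$$\widehat{S_t}(m)=q^{-d-1}\sum_{s\neq0}\chi(-st)\prod_{j=1}^{d}\Big(\sum_{u\in\mathbb F_q}\chi\big(sa_ju^{c}-m_ju\big)\Big),$$
the $s=0$ contribution vanishing because $m\neq(0,\dots,0)$. When $c=2$ each inner factor is a Gauss sum; completing the square, the product equals $\eta(s)^{d}$ times a factor of modulus $q^{d/2}$ times $\chi\big(-Q^{*}(m)/(4s)\big)$, where $\eta$ is the quadratic character and $Q^{*}(m)=\sum_j m_j^2/a_j$, and since $d$ is odd $\eta(s)^{d}=\eta(s)$, so the surviving $s$-sum is a Sali\'e sum of modulus $\le2\sqrt q$; this yields $|\widehat{S_t}(m)|\lesssim q^{-(d+1)/2}$, and combined with the $t=0$ case it proves (\ref{keydecay}), hence the conjecture, for $c=2$ in every odd dimension where the homogeneous-variety decay is known (in particular $d=3$).

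The step I expect to be the main obstacle is $t\neq0$ with $c\ge3$. There one has only Weil's bound $\big|\sum_u\chi(sa_ju^{c}-m_ju)\big|\le(c-1)\sqrt q$, so the inner product is merely $\lesssim q^{d/2}$ and the extra factor $q^{-1/2}$ demanded by (\ref{keydecay}) has to come from genuine cancellation in the sum over $s$ — a Kloosterman--Sali\'e-- or Deligne-type input. This is delicate because the leading form of the phase $s(P(z)-t)-m\cdot z$ in the $(d+1)$ variables $(s,z)$ is $s\,P(z)$, whose projective zero set $\{s=0\}\cup\{P(z)=0\}$ is reducible and hence singular, so the naive Deligne estimate does not apply directly. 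The scheme is likewise incomplete at $t=0$, $c\ge3$ in odd dimensions $d\ge5$: there one needs the analogue of Corollary~\ref{Maincor}, namely that homogeneous varieties with no hyperplane through the origin enjoy decay $q^{-(d+1)/2}$, which goes beyond the Schwartz--Zippel argument behind Lemma~\ref{intersection} and is precisely the question raised at the end of the paper.
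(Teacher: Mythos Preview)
Your reduction is exactly the paper's: the counting function $\nu(t)$, the bound $|\Delta_P(E,F)|\ge |E||F|/\max_t\nu(t)$, the Fourier expansion of $\nu(t)$ with main term controlled by Schwartz--Zippel and error term by Cauchy--Schwarz plus Plancherel, and the reduction to the uniform decay $|\widehat{S_t}(m)|\lesssim q^{-(d+1)/2}$. For $t=0$ in dimension three you correctly invoke Corollary~\ref{Maincor} after checking that $S_0$ contains no plane through the origin, which is precisely the paper's argument.

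The only genuine gap is the case $t\neq 0$, $c\ge 3$, which you flag as the main obstacle. Here you have overestimated the difficulty. The paper does not attempt any cancellation argument in the $s$-sum; it simply quotes a known estimate (Lemma~\ref{Todd}, i.e.\ Cochrane \cite[4.4.19]{Co94}) asserting that for the diagonal form $P(x)=\sum_j a_jx_j^c$ with $\operatorname{char}\mathbb F_q\nmid c$ and every $t\neq 0$, one has $|\widehat{H_t}(m)|\lesssim q^{-(d+1)/2}$ for all $m\neq 0$, in \emph{every} dimension. The point is that for $t\neq 0$ the affine hypersurface $\{P=t\}$ is smooth (its projective closure $\{\sum a_jx_j^c=tx_{d+1}^c\}$ is a smooth Fermat-type hypersurface), so Deligne-type bounds apply directly; your worry about the reducible leading form $sP(z)$ in the $(s,z)$-variables is an artifact of the particular packaging you chose and disappears if one works with the level set itself. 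Thus the $t\neq 0$ slice is the \emph{easy} one and is handled by a citation; the paper's new contribution is exactly the $t=0$ slice, which you did correctly.

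With Lemma~\ref{Todd} inserted, your argument is complete for $d=3$ and all $c\ge 2$, matching Theorem~\ref{mainthree}. Your closing remark that the $t=0$ case for $c\ge 3$ in odd $d\ge 5$ is open is accurate and coincides with Question~\ref{trueconjecture}.
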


Authors in \cite{KoS10} proved that the conclusion in Conjecture \ref{KSconj} holds for all dimensions $d\geq 2$ if we assume that $|E||F|\geq C q^d$ for  a sufficiently large constant $C>0$ (see Corollary 3.5 in \cite{KoS10}). They also introduced a simple example to show that if the dimension $d$ is even, then  the assumption $|E||F|\geq C q^d$ is necessary. In addition, they pointed out that Conjecture \ref{KSconj} is true if $c=2.$
In this section, we shall prove that Conjecture \ref{KSconj} is true in the case when the dimension $d$ is three.
Observe that if Conjecture \ref{KSconj} is true, then the distance set has its nontrivial cardinality for $|E||F|\geq C q^{d-1}$ with $d$ odd.

\subsection{Main result for the Erd\H os-Falconer distance problem} In this subsection, we prove the following main theorem.
\begin{theorem} \label{mainthree}
In dimension three, Conjecture \ref{KSconj} is true.\end{theorem}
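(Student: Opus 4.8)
The plan is to run the standard Fourier-analytic distance-set machinery of Iosevich--Rudnev, but to feed it the strong Fourier decay on the level sets of $P$ that we obtain from Corollary \ref{Maincor} rather than the generic Gauss-sum bound. Fix $P(x)=\sum_{j=1}^3 a_jx_j^c$ with $a_j\neq 0$, $c\geq 2$, and for each $t\in\mathbb F_q$ let $H_t=\{x\in\mathbb F_q^3: P(x)-t=0\}$; note $H_0$ is precisely a homogeneous variety of the type covered by Corollary \ref{Maincor}, since $P$ is homogeneous of degree $c$ and contains no plane through the origin (each variable appears to the pure power $c$, so $R(x_1,x_2)=P(x_1,x_2,m_1x_1+m_2x_2)$ is never the zero polynomial unless $c=1$). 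The first step is to introduce the counting function
$$\nu(t)=|\{(x,y)\in E\times F: P(x-y)=t\}|,$$
so that $\sum_{t\in\mathbb F_q}\nu(t)=|E||F|$ and, by Cauchy--Schwarz, $|\Delta_P(E,F)|\geq (|E||F|)^2/\sum_t \nu(t)^2$. Thus the entire problem reduces to the upper bound
$$\sum_{t\in\mathbb F_q}\nu(t)^2 \lesssim \frac{(|E||F|)^2}{q} + q^{d-2}|E||F|,$$
which, combined with the Cauchy--Schwarz lower bound, yields $|\Delta_P(E,F)|\gtrsim\min(q, q^{-(d-1)/2}\sqrt{|E||F|})$ once $d=3$.

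The second step is to expand $\sum_t\nu(t)^2$ using Fourier analysis on $\mathbb F_q$: writing $P(x-y)=t$ detected by $\frac1q\sum_{s\in\mathbb F_q}\chi(s(P(x-y)-t))$, one gets $\sum_t\nu(t)^2$ as $q^{-1}(|E||F|)^2$ (the $s=0$ term) plus an error term of the shape
$$\frac{1}{q}\sum_{s\neq 0}\sum_{x,x'\in E}\sum_{y,y'\in F}\chi\big(sP(x-y)-sP(x'-y')\big),$$
which after the change of variables and opening the character sum is controlled by $q^{d}\sum_{m\neq 0}|\widehat{E}(m)|^2\,|\widehat{F}(m)|^2\cdot\max_{s\neq 0}|\widehat{H^s}(m)|\cdot(\text{normalization})$, where $\widehat{H^s}$ denotes the Fourier transform of the level set of $x\mapsto sP(x)$. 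The key point is that each such level set, being $\{x: sP(x)=0\}=\{x:P(x)=0\}=H_0$ for the homogeneous part — or more precisely the relevant quantity reduces to the Fourier transform of the genuinely homogeneous variety $\{P(x)=0\}$ — so Corollary \ref{Maincor} gives $|\widehat{H_0}(m)|\lesssim q^{-2}$ for all $m\neq 0$, i.e.\ a decay exponent matching the good bound $(\ref{verygood})$ with $d=3$. Plugging this in and applying the Plancherel bound $(\ref{Plancherel})$ twice, $\sum_{m\neq0}|\widehat{E}(m)|^2|\widehat{F}(m)|^2\leq \|\widehat{E}\|_\infty^2\sum_m|\widehat{F}(m)|^2 \le q^{-2d}|E||F|\cdot$(lower-order), and bookkeeping the powers of $q$ produces exactly the claimed error term $q^{d-2}|E||F|=q|E||F|$ when $d=3$.

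The third step is to assemble: from $\sum_t\nu(t)^2\lesssim q^{-1}(|E||F|)^2 + q|E||F|$ and $|\Delta_P(E,F)|\geq (|E||F|)^2/\sum_t\nu(t)^2$, we get $|\Delta_P(E,F)|\gtrsim \min\big(q,\ q^{-1}|E||F|\big)$; since $d=3$ gives $q^{-(d-1)/2}=q^{-1}$, this is precisely the conclusion of Conjecture \ref{KSconj}. I expect the main obstacle to be the bookkeeping in the second step: one must carefully track how the dilation parameter $s$ interacts with the homogeneity of $P$ so that the level sets $\{sP(x)=0\}$ really do all coincide with the single homogeneous variety $H_0$ to which Corollary \ref{Maincor} applies, and one must make sure that the "$m=0$" and "$s=0$" contributions are separated cleanly and that no spurious factor of $q$ is lost when passing between the normalized surface measure $d\sigma$ and the characteristic function of $H_0$. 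A secondary technical point is confirming the hypothesis $|H_0|\sim q^2$ (so that Remark \ref{inversionremark} is available if one prefers to phrase the decay in terms of $(d\sigma)^\vee$); this follows because $P(x)=\sum a_jx_j^c$ with $a_j\neq0$ defines a nondegenerate hypersurface through the origin, so $|H_0|=q^2+O(q^{3/2})$ by Weil, hence $\sim q^2$.
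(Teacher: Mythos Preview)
Your overall plan---run the Iosevich--Rudnev counting machinery and feed it the sharp decay $|\widehat{H_0}(m)|\lesssim q^{-2}$ coming from Corollary~\ref{Maincor}---is the paper's idea, and your check that $H_0=\{a_1x_1^c+a_2x_2^c+a_3x_3^c=0\}$ contains no plane through the origin is correct. There is, however, a genuine gap in your second step.

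The problem is that controlling the distance count (whether via your $L^2$ route $|\Delta_P(E,F)|\ge (|E||F|)^2/\sum_t\nu(t)^2$ or via the paper's $L^\infty$ route $|E||F|\le \max_t\nu(t)\cdot|\Delta_P(E,F)|$, which is Lemma~\ref{stronglemma}) requires the decay $|\widehat{H_t}(m)|\lesssim q^{-2}$ for \emph{every} level set $H_t$, $t\in\mathbb F_q$, not only for $t=0$. Your claim that ``the relevant quantity reduces to the Fourier transform of the genuinely homogeneous variety $\{P(x)=0\}$'' is not right: the identity $\{x:sP(x)=0\}=H_0$ for $s\neq 0$ is true but irrelevant, because after detecting $P(x-y)=t$ with characters and opening the square, what appears is either the full family $\{\widehat{H_t}(m)\}_{t\in\mathbb F_q}$, or equivalently the sums $T(s,m)=\sum_x\chi(sP(x)-m\cdot x)$ for all $s\neq 0$. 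Homogeneity does \emph{not} collapse these to a single object: $H_t$ is a dilate of $H_1$ only when $t$ is a $c$-th power, and for generic $t\neq 0$ the variety $H_t$ is not a cone at all. (If you try the Parseval route $\sum_t\nu(t)^2=q^{-1}\sum_s|S(s)|^2$ with $S(s)=\sum_{x\in E,y\in F}\chi(sP(x-y))$, you will find the computation is circular.)

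The paper closes this gap by invoking Lemma~\ref{Todd} (Cochrane), which supplies $|\widehat{H_t}(m)|\lesssim q^{-2}$ for every $t\neq 0$ from classical exponential-sum bounds; these level sets are smooth. Corollary~\ref{Maincor} is then used \emph{only} for the singular level set $t=0$, where Lemma~\ref{Todd} gives merely $q^{-3/2}$. Once you add the $t\neq 0$ input from Lemma~\ref{Todd}, the hypothesis~(\ref{SharpDecay}) of Lemma~\ref{stronglemma} is verified for all $t$, and the conclusion follows immediately. Two minor remarks: the paper's $L^\infty$ route via $\max_t\nu(t)$ is cleaner than the $L^2$ energy route you chose (the latter needs extra care to avoid losing a factor of $q$ when summing $|II(t)|^2$ over $t$); and you do not actually need $|H_0|\sim q^2$, since Corollary~\ref{Maincor} gives the decay directly from the no-plane hypothesis and only $|H_t|\lesssim q^2$ (Schwartz--Zippel) enters the main term.
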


First, we derive a formula for proving Theorem \ref{mainthree}.
 Let $P(x)\in \mathbb F_q[x_1,\dots,x_d]$ be a polynomial with degree $\geq 2.$ For each $t\in \mathbb F_q,$ define a variety $H_t \subset \mathbb F_q^d,$ by the set
$$ H_t=\{x\in \mathbb F_q^d: P(x)=t\}.$$
Then, we have the following distance formula.
\begin{lemma}\label{stronglemma}Suppose that for every $m\in \mathbb F_q^d\setminus \{(0,\dots,0)\}$ and $t\in \mathbb F_q$, we have
\begin{equation}\label{SharpDecay} |\widehat{H_t}(m)|\lesssim q^{-\frac{d+1}{2}}.\end{equation}
Then, if $E, F \subset \mathbb F_q^d,$  then we have
$$ |\Delta_P(E,F)| \gtrsim \min\left( q, q^{-\frac{d-1}{2}} \sqrt{|E||F|}\right).$$
\end{lemma}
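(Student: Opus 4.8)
The plan is to run the standard Fourier-analytic counting argument for distance sets, exploiting the hypothesis \eqref{SharpDecay} exactly where the uniform decay is needed. For $t \in \mathbb F_q$ introduce the counting function
$$ \nu(t) = |\{(x,y) \in E \times F : P(x-y) = t\}| = \sum_{x \in E, y \in F} H_t(x-y), $$
so that $\nu$ is supported on $\Delta_P(E,F)$ and $\sum_{t} \nu(t) = |E||F|$. By Cauchy--Schwarz,
$$ |E|^2|F|^2 = \Big( \sum_{t \in \Delta_P(E,F)} \nu(t) \Big)^2 \leq |\Delta_P(E,F)| \sum_{t \in \mathbb F_q} \nu(t)^2, $$
so it suffices to show $\sum_t \nu(t)^2 \lesssim \max\big( |E|^2|F|^2 q^{-1},\ q^{d-1}|E||F| \big)$; combining the two bounds then yields the claimed lower bound on $|\Delta_P(E,F)|$.

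The main work is estimating $\sum_t \nu(t)^2$. First I would expand $\nu(t)$ using the Fourier inversion formula \eqref{Finversion} applied to $H_t$, writing $H_t(x-y) = \sum_{m} \chi(m\cdot(x-y)) \widehat{H_t}(m)$, and separate the $m = (0,\dots,0)$ term, for which $\widehat{H_t}(0) = |H_t|/q^d$. The zero-frequency contribution to $\nu(t)$ is $|E||F||H_t|/q^d$; summing its square over $t$ and using $\sum_t |H_t| = q^d$ together with the Schwartz--Zippel-type bound $|H_t| \lesssim q^{d-1}$ produces the term $\lesssim |E|^2|F|^2 q^{-1}$. For the non-zero frequencies, after squaring and summing in $t$ one encounters $\sum_{t} \widehat{H_t}(m)\overline{\widehat{H_t}(m')}$; the key algebraic fact is that $\sum_{t \in \mathbb F_q} \widehat{H_t}(m)\overline{\widehat{H_t}(m')}$ can be evaluated by writing $\widehat{H_t}(m) = q^{-d}\sum_{x : P(x)=t}\chi(-m\cdot x)$ and using orthogonality of $\chi$ over $t$, collapsing the double sum so that only the diagonal-type terms survive with a controllable weight. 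Applying the hypothesis $|\widehat{H_t}(m)| \lesssim q^{-(d+1)/2}$ to bound one factor and Plancherel \eqref{Plancherel} for $\widehat{E}$ and $\widehat{F}$ on the remaining sums over $m$, one obtains that the non-zero-frequency contribution to $\sum_t \nu(t)^2$ is $\lesssim q^{d-1}|E||F|$.

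The step I expect to be the main obstacle is the bookkeeping in the non-zero frequency term: after expanding $\nu(t)^2$ one has a sum over $m, m', $ and four points of $E \times F \times E \times F$, and one must carefully use the $t$-orthogonality to reduce it, then split off the $m'=0$ or $m=0$ cross terms (which feed back into the first bound), and finally apply \eqref{SharpDecay} to just one of the two Fourier factors so that the other can be summed exactly via Plancherel rather than being crudely bounded in $L^\infty$. Getting the exponents to land on $q^{d-1}|E||F|$ rather than something larger hinges on spending the decay estimate on only one factor. Once both pieces are in hand, the two regimes in $\min(q, q^{-(d-1)/2}\sqrt{|E||F|})$ emerge according to which of the two terms dominates $\sum_t \nu(t)^2$, completing the proof.
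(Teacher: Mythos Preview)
Your approach via the second moment $\sum_t \nu(t)^2$ is different from the paper's, and the sketch has a real gap at the step you flag as ``the key algebraic fact.''  When you write $\widehat{H_t}(m) = q^{-d}\sum_{x:P(x)=t}\chi(-m\cdot x)$ and sum over $t$, there is no character $\chi(\cdot\, t)$ present; the sum $\sum_t \widehat{H_t}(m)\overline{\widehat{H_t}(m')}$ simply becomes $q^{-2d}\sum_{P(x)=P(x')}\chi(-m\cdot x + m'\cdot x')$, which does not collapse to a diagonal in $(m,m')$.  Consequently, the only bound your outline actually delivers on the off-zero part is the crude one obtained by using $|\widehat{H_t}(m)|\lesssim q^{-(d+1)/2}$ uniformly and then summing trivially in $t$, which yields $\sum_t |\text{II}(t)|^2 \lesssim q \cdot q^{d-1}|E||F| = q^d|E||F|$, a factor of $q$ worse than the $q^{d-1}|E||F|$ you claim.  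Feeding this into Cauchy--Schwarz only gives $|\Delta_P(E,F)|\gtrsim \min(q,\,|E||F|/q^d)$, which is strictly weaker than the lemma's conclusion in the range $|E||F|<q^{d+1}$.

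The paper avoids this entirely by working in $L^\infty$ rather than $L^2$: instead of Cauchy--Schwarz it uses the elementary inequality
\[
|E||F| = \sum_{t\in\Delta_P(E,F)} \nu(t) \le \Big(\max_{t\in\mathbb F_q}\nu(t)\Big)\,|\Delta_P(E,F)|,
\]
so it suffices to bound $\max_t\nu(t)$.  For each fixed $t$ one writes $\nu(t)=q^{2d}\sum_m \overline{\widehat{E}}(m)\widehat{F}(m)\widehat{H_t}(m)$, separates $m=0$ (giving $q^{-d}|E||F||H_t|\lesssim q^{-1}|E||F|$ by Schwartz--Zippel), and on the remaining $m\neq 0$ applies \eqref{SharpDecay} once, then Cauchy--Schwarz in $m$ and Plancherel, to get $\lesssim q^{(d-1)/2}\sqrt{|E||F|}$.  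No sum over $t$ is ever needed, and the two terms immediately produce the two regimes in the minimum.  This is both shorter and sidesteps the cross-term bookkeeping that you correctly anticipated would be the obstacle.
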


\begin{proof} First, we consider a counting function $\nu$ on $\mathbb F_q$, given by
$$ \nu(t)=|\{(x,y)\in E\times F: P(x-y)=t\}|=|\{(x,y)\in E\times F: x-y\in H_t\}|.$$
Recall that $\Delta_P(E,F)=\{P(x-y)\in \mathbb F_q: x\in E, y\in F\}$ and notice that
\begin{equation}\label{forit} |E||F|= \sum_{t\in \Delta_P(E,F)} \nu(t) \leq \left(\max_{t\in \mathbb F_q} \nu(t)\right)~ |\Delta_P(E,F)|.\end{equation}
Thus, the estimate for the upper bound of $\max_{t\in \mathbb F_q} \nu(t)$ is needed. For each $t\in \mathbb F_q,$ applying the Fourier inversion theorem (\ref{Finversion}) to the function $ H_t(x-y),$ and then  using the definition of the Fourier transform, we see that
$$\nu(t)= \sum_{x\in E, y\in F} H_t(x-y)= q^{2d} \sum_{m\in {\mathbb F_q^d}}\overline{\widehat{E}}(m) \widehat{F}(m) \widehat{H_t}(m).$$
Now, write $\nu(t)$ by
\begin{align*}\label{counting}
\nu(t)=&q^{2d} \overline{\widehat{E}}(0,\dots,0) \widehat{F}(0,\dots,0) \widehat{H_t}(0,\dots,0) +
q^{2d} \sum_{m\in {\mathbb F_q^d}\setminus \{(0,\dots,0)\}}\overline{\widehat{E}}(m) \widehat{F}(m) \widehat{H_t}(m)\\
=&\mbox{I} + \mbox{II}\nonumber. \end{align*}

From the definition of the Fourier transform and the Schwartz-Zippel lemma, it follows that
$$ |\mbox{I}|= \frac{1}{q^d} |E||F||H_t| \lesssim q^{-1}|E||F|.$$
On the other hand, our hypothesis (\ref{SharpDecay}) and the Cauchy-Schwarz inequality yield
$$|\mbox{II}|\lesssim q^{2d}q^{-\frac{d+1}{2}} \left(\sum_{m} \left|\overline{\widehat{E}}(m)\right|^2\right)^\frac{1}{2}
\left(\sum_{m} \left|\widehat{F}(m)\right|^2\right)^\frac{1}{2}.$$
Applying the Plancherel theorem (\ref{Plancherel}), we obtain
$$|\mbox{II}| \lesssim q^{\frac{d-1}{2}} |E|^{\frac{1}{2}} |F|^{\frac{1}{2}}.$$
Thus, it follows that
$$ \max_{t\in \mathbb F_q} \nu(t) \lesssim q^{-1}|E||F| + q^{\frac{d-1}{2}} |E|^{\frac{1}{2}} |F|^{\frac{1}{2}}.$$
From this fact and (\ref{forit}), a direct calculation completes the proof.
\end{proof}

It seems that the assumption (\ref{SharpDecay}) in Lemma \ref{stronglemma} is too strong. For example, if dimension $d$ is even, $H_t=\{x\in \mathbb F_q^d: x_1^c+\dots+x_d^c=t\}, c\geq 2,$ and $ u^c=-1$ for some $u\in \mathbb F_q,$ then  this case can not satisfy the assumption (\ref{SharpDecay}). This follows from a simple observation that if $E=F=\{ (t_1, ut_1, \dots, t_{d/2}, u t_{d/2}) \in \mathbb F_q^d: t_j\in \mathbb F_q \}$, then $|E|=|F|=q^{d/2} $ and $|\Delta_P(E,F)|=|\{0\}|=1,$ which does not satisfy the conclusion of Lemma \ref{stronglemma}. However, observe that if the dimension $d$ is odd, then the similar example does not exist. For this reason, Conjecture \ref{KSconj} looks true. In fact, the following lemma says that only $H_0$ in the previous example violates the assumption (\ref{SharpDecay}).

\begin{lemma}[4.4.19 in \cite{Co94}]\label{Todd} Let $P(x)=\sum\limits_{j=1}^d a_jx_j^s \in \mathbb F_q[x_1,\dots,x_d]$ with $s\geq 2, a_j\neq 0$ for all $j=1,\dots,d.$
In addition, assume that the characteristic of $\mathbb F_q$ is sufficiently large  so that  it does not divide $s.$ Then,
$$ |\widehat{H_t}(m)|=\frac{1}{q^d} \left|\sum_{x\in H_t} \chi(-x\cdot m)\right| \lesssim q^{-\frac{d+1}{2}} \quad \mbox{for all}~~ m\in \mathbb F_q^d\setminus \{(0,\dots,0)\}, t\in \mathbb F_q\setminus\{0\},$$
and
\begin{equation}\label{mainissue} |\widehat{H_0}(m)| \lesssim q^{-\frac{d}{2}} \quad \mbox{for all} ~~ m \in \mathbb F_q^d\setminus \{(0,\dots,0)\},\end{equation}
where $H_t =\{x\in \mathbb F_q^d: P(x)=t\}.$
\end{lemma}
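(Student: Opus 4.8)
The plan is to reduce the claim to a classical Gauss-sum estimate, using in an essential way that $P$ is a \emph{diagonal} form. First I would write the indicator of $H_t$ via orthogonality of the additive character,
$$H_t(x)=\frac{1}{q}\sum_{\tau\in\mathbb F_q}\chi\bigl(\tau(P(x)-t)\bigr),$$
which gives, after expanding $\chi(\tau P(x)-x\cdot m)$ over coordinates,
$$\widehat{H_t}(m)=\frac{1}{q^{d+1}}\sum_{\tau\in\mathbb F_q}\chi(-\tau t)\prod_{j=1}^{d}S_j(\tau),\qquad S_j(\tau):=\sum_{x\in\mathbb F_q}\chi(\tau a_jx^s-m_jx),$$
the product appearing precisely because $P$ is diagonal. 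The term $\tau=0$ contributes $\prod_{j}\sum_{x}\chi(-m_jx)$, which vanishes as soon as some $m_j\neq0$, i.e.\ whenever $m\neq(0,\dots,0)$; so only $\tau\neq0$ survives.

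For $\tau\neq0$ the polynomial $\tau a_jx^s-m_jx$ has degree exactly $s$, and since the characteristic of $\mathbb F_q$ does not divide $s$, Weil's bound for complete exponential sums gives $|S_j(\tau)|\le(s-1)\sqrt q$, hence $\bigl|\prod_jS_j(\tau)\bigr|\lesssim q^{d/2}$. Summing this bound over the $q-1$ nonzero values of $\tau$ already yields $|\widehat{H_t}(m)|\lesssim q^{-d/2}$ for \emph{every} $t$; in particular this proves $(\ref{mainissue})$, and it is sharp in general (for instance for the cone in an even number of variables).

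To sharpen the estimate to $q^{-(d+1)/2}$ when $t\neq0$ I would need to gain one further factor $q^{1/2}$ from cancellation in the $\tau$-sum, and here the arithmetic of diagonal hypersurfaces enters. The idea is to expand each $S_j(\tau)$ in terms of Gauss sums: writing the number of $s$-th roots of a nonzero element as a sum of multiplicative characters of order dividing $\gcd(s,q-1)$ and substituting, the $\tau$-dependence of $S_j(\tau)$ gets exposed --- for $m_j=0$ simply as a power character $\bar\lambda(\tau)$ with a Gauss-sum coefficient, and for $m_j\neq0$ through the residual complete sum in those variables. After collecting terms, the inner sum over $\tau\neq0$ becomes, in each piece, a Gauss sum, a Kloosterman-type sum, or --- in the most degenerate case --- the sum $\sum_{\tau\neq0}\chi(-\tau t)=-1$ (using $t\neq0$); in all cases Weil's estimates bound it by $O(\sqrt q)$, which is the saving over the trivial $q-1$ on $\sum_{\tau\neq0}1$. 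Assembling the pieces yields $|\widehat{H_t}(m)|\lesssim q^{-d-1}\cdot q^{d/2}\cdot q^{1/2}=q^{-(d+1)/2}$. This is nothing but the classical Gauss/Jacobi-sum evaluation of (additively twisted) point counts on diagonal hypersurfaces, carried out in detail in \cite{Co94}.

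The main obstacle is the bookkeeping in this last step, and concretely the factors $S_j(\tau)$ with $m_j\neq0$: these are genuine Weil sums of a degree-$s$ polynomial rather than pure power-character Gauss sums, so extracting their $\tau$-dependence in a form that exhibits the extra cancellation requires a further round of Gauss-sum manipulation and a careful enumeration of the degenerate combinations of characters; controlling every such term by $O(\sqrt q)$ is precisely what forces the hypothesis that the characteristic not divide $s$. The remaining ingredients --- the character-sum reduction, the uniform Weil bound, and the $t=0$ case --- are routine.
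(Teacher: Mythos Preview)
The paper does not give its own proof of this lemma; it simply quotes it as Theorem~4.4.19 of Cochrane's monograph \cite{Co94}. Your sketch---orthogonality in $\tau$, the diagonal factorization $\prod_j S_j(\tau)$, the trivial Weil bound yielding $q^{-d/2}$, and then the Gauss/Jacobi-sum expansion of each factor to extract the extra $q^{1/2}$ saving from the $\tau$-sum when $t\neq0$---is exactly the classical route Cochrane follows, so there is nothing to compare: you are reconstructing the cited proof.

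One comment on correctness: your outline is sound, and your identification of the real work (the factors with $m_j\neq0$) is accurate. For $m_j=0$ the expansion $S_j(\tau)=\sum_{\psi\neq\epsilon,\ \psi^\delta=\epsilon}\bar\psi(\tau a_j)G(\psi)$ (with $\delta=\gcd(s,q-1)$) makes the $\tau$-dependence a pure multiplicative character, and the resulting $\tau$-sum is a Gauss sum, a twisted Kloosterman sum, or $\sum_{\tau\neq0}\chi(-\tau t)=-1$, each $O(\sqrt q)$. For $m_j\neq0$ one must first isolate the $\tau$-dependence by a further substitution before the same mechanism applies; this is where the bookkeeping you flag is genuinely needed, and where the hypothesis $\mathrm{char}\,\mathbb F_q\nmid s$ is used to keep the degree of the Weil sums honest. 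None of this is a gap in your plan---it is precisely the content of \cite{Co94}, to which both you and the paper defer.
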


\subsection{Complete proof of Theorem \ref{mainthree}} From Lemma \ref{Todd} and Lemma \ref{stronglemma}, it suffices to prove that for every $m\neq (0,0,0),$
$$ |\widehat{H_0}(m)|\lesssim q^{-2},$$
where $H_0=\{x\in \mathbb F_q^3: a_1x_1^c+a_2x_2^c+a_3x_3^c=0\}$ with $a_j\neq 0, j=1,2,3,  c\geq 2.$
From Corollary \ref{Maincor}, it is enough to show that $H_0$ does not contain any plane passing through the origin in $\mathbb F_q^3.$
By contradiction, assume that $H_0$ contains  a plane $\Pi_m =\{x\in \mathbb F_q^3: m\cdot x=0\}$ for some $m\neq (0,0,0).$
Without loss of generality, assume that $\Pi_m=\{x\in \mathbb F_q^3: x_3=m_1^\prime x_1 +m_2^\prime x_2\}$ for some $m_1^\prime, m_2^\prime \in \mathbb F_q.$ Then, $|H_0\cap \Pi_m|=|\Pi_m|=q^2.$ However, this is impossible if $q$ is sufficiently large. To see this, notice that
$$ |H_0\cap \Pi_m|=|\{(x_1,x_2, m_1^\prime x_1 +m_2^\prime x_2)\in \mathbb F_q^3: a_1x_1^c +a_2x_2^c+ a_3 (m_1^\prime x_1 +m_2^\prime x_2)^c=0\}|$$
$$=|\{(x_1,x_2)\in \mathbb F_q^2: a_1x_1^c +a_2x_2^c+ a_3 (m_1^\prime x_1 +m_2^\prime x_2)^c=0\}|\lesssim q,$$
where the last inequality follows from the Schwartz-Zippel lemma, because one can check that $a_1x_1^c +a_2x_2^c+a_3 (m_1^\prime x_1 +m_2^\prime x_2)^c$ is a nonzero polynomial for $c\geq 2$ and $a_j\neq 0 , j=1,2,3.$ Thus, we complete the proof of Theorem \ref{mainthree}.

\section{Note on problems for homogeneous varieties in higher odd dimensions}
We have seen that the good Fourier decay on homogeneous varieties makes a key role to study the problems in this paper.
We also have been mentioning that one could obtain good Fourier decay on homogeneous varieties in odd dimensions, but fail in even dimensions. For reader's convenience, we give the following explicit computations to indicate the estimates of Fourier transform over quadratic homogeneous varieties are different between odd and even dimensions.  Now let us see the following two examples.
First, suppose that $V=\{x\in \mathbb F_q^3: x_1^2+x_2^2+x_3^2=0\}.$
Then, for each $m\neq (0,0,0),$ we have
$$ \widehat{V}(m)= q^{-4} \sum_{s\neq 0} \prod_{j=1}^3 \sum_{x_j\in \mathbb F_q} \chi( sx_j^2-m_jx_j) .$$
Completing the square and making a change of variables, we observe that
$$\sum_{x_j\in \mathbb F_q} \chi( sx_j^2-m_jx_j) = \sum_{x_j\in \mathbb F_q} \chi(sx_j^2) \chi\left( \frac{ m_j^2}{-4s}\right)$$
$$= G \eta(s) \chi\left( \frac{ m_j^2}{-4s}\right),$$
where $G$ denotes the Gauss sum, $\eta$ denotes the quadratic character, and we use the fact that $\sum_{x_j\in \mathbb F_q} \chi(sx_j^2) = G \eta(s).$
Thus, we see that for $m\neq (0,0,0),$
$$\widehat{V}(m)= q^{-4} G^3 \sum_{s\neq 0} \eta^3(s) \chi\left( \frac{ m_1^2+m_2^2+m_3^2}{-4s}\right).$$
Since $\eta$ is the quadratic character, $ \eta^3=\eta,$ and so the sum over $s\neq 0$ is a Sali\'e sum \cite{Sa32} which is always $\lesssim q^{1/2}.$ Thus, we get a good Fourier decay on $V$ for all $m\neq (0,0,0).$ Namely, we have for $m\neq (0,0,0),$
$$|\widehat{V}(m)|\lesssim q^{-2}= q^{-\frac{d+1}{2}},$$ which is what we want.\\

However, now consider  $V=\{x\in \mathbb F_q^4: x_1^2+x_2^2+x_3^2+x_4^2=0\}.$
Using above method, we see that for each $m\neq (0,0,0,0)$
$$ \widehat{V}(m)= q^{-5} G^4 \sum_{s\neq 0} \eta^4(s) \chi\left( \frac{ m_1^2+m_2^2+m_3^2+m_4^2}{-4s}\right).$$
Since $\eta^4=1$, the sum over $s\neq 0 $ is  $(q-1)$ if $m_1^2+m_2^2+m_3^2+m_4^2=0.$ Thus, for some $m\neq (0,0,0,0),$ we have
$$ |\widehat{V}(m)|\sim q^{-2}= q^{-\frac{d}{2}},$$ which is worse than $q^{-\frac{5}{2}}=q^{-\frac{d+1}{2}}.$ Therefore, the question we want to address first is whether the estimate (\ref{mainissue}) in Lemma \ref{Todd} can be improved in all odd dimensions.
If the dimension is even, then  above estimates say that we can not expect to improve it (at least for the case $s=2$).
However, we already observed that in three dimension the estimate (\ref{mainissue}) can be improved to $|\widehat{H_0}(m)| \lesssim q^{-\frac{d+1}{2}}=q^{-2}$ for $m\neq (0,0,0).$ From these facts, one may have the following question.
\begin{question}\label{trueconjecture} Let $P(x)=\sum\limits_{j=1}^d a_jx_j^s \in \mathbb F_q[x_1,\dots,x_d]$ with $s\geq 2, a_j\neq 0$ for all $j=1,\dots,d.$
If we assume that the characteristic of $\mathbb F_q$ is sufficiently large and the dimension $d\geq 3$ is odd, then
does the following conclusion  always hold?
$$ |\widehat{H}(m)|\lesssim q^{-\frac{d+1}{2}} \quad \mbox{for all}~~ m\in \mathbb F_q^d \setminus \{(0,\dots,0)\},$$
where $H=\{x\in \mathbb F_q^d: P(x)=0\}.$
\end{question}

If the answer for Question \ref{trueconjecture} would be positive, then this would yield the Tomas-Stein exponent for the extension problem related to diagonal polynomials in odd dimensions. Moreover, the averaging problem on homogeneous varieties in odd dimensions would be completely understood.
 As a trial to find the answer for Question \ref{trueconjecture}, one may invoke some powerful results from algebraic geometry such as \cite{Ka99} and \cite{Co94}. However, it seems that such theorems do not explain  that the Fourier decays of homogeneous varieties in odd dimensions are better than in even dimensions. We also remark that if $P$ is not quadratic, it is also not clear that if the Fourier decay will be distinguished between odd dimensions and even dimensions.  We close this paper with a desire to see the answer for Question \ref{trueconjecture} in the near future.\\\\

{\bf Acknowledgment :} The authors would like to thank Nicholas Katz and Igor Shparlinski for some helpful discussions about the exponential sums.

\end{document}